\newtheorem{thm0}{Theorem}
\newtheorem{thm}{Theorem}[section] %to numerate individually on each section
\newtheorem{cor0}[thm0]{Corollary}
\theoremstyle{definition}
\newtheorem{defn}[thm]{Definition}
\newtheorem{example}[thm]{Example}
\theoremstyle{remark}
\newtheorem{remark}[thm]{Remark}
\newcommand{\Oc}{{\mathcal O}}
\newcommand{\AAA}{\mathbb{A}}
\newcommand{\ZZ}{\mathbb{Z}}
\newcommand{\QQ}{\mathbb{Q}}
\newcommand{\CC}{\mathbb{C}}
\newcommand{\PP}{\mathbb{P}}
\newcommand{\X}{X_C}
\DeclareMathOperator\PSL{PSL}
\DeclareMathOperator\CM{CM}
\DeclareMathOperator\Gr{Gr}
\DeclareMathOperator\orb{orb}
\DeclareMathOperator\red{red}
\DeclareMathOperator\rk{rk}
\DeclareMathOperator\Alb{Alb}
\DeclareMathOperator\alb{alb}
\DeclareMathOperator\Jac{Jac}
\DeclareMathOperator\Hom{Hom}
\DeclareMathOperator\im{Im}
\DeclareMathOperator\Char{Char}
\DeclareMathOperator\Pic{Pic}
\DeclareMathOperator\Sing{Sing}
\newcommand{\g}{g}
\newcommand{\RightEqNo}{\let\veqno\@@eqno}
\newcommand\V[2]{V^{#1}_{#2}}
\title[Albanese varieties and orbifold pencils]{Albanese varieties of cyclic covers of the projective plane 
and orbifold pencils}
\author[E. Artal]{E.~Artal Bartolo}
\author[J.I. Cogolludo]{J.I.~Cogolludo-Agust{\'\i}n}
\address{Departamento de Matem\'aticas, IUMA\\ 
Universidad de Zaragoza\\ 
C.~Pedro Cerbuna 12\\ 
50009 Zaragoza, Spain} 
\email{artal@unizar.es,jicogo@unizar.es} 
\author{A.~Libgober} 
\address{ 
Department of Mathematics\\ 
University of Illinois\\ 
851 S.~Morgan Str.\\ 
Chicago, IL 60607} 
\email{libgober@uic.edu} 
\thanks{
The first two authors are partially supported. 
The first and second authors are partially supported by the Spanish Government 
MTM2016-76868-C2-2-P and by the Departamento de Industria e 
Innovación del Gobierno de Aragón and Fondo Social Europeo
\emph{E15 Grupo Consolidado Geometr\'{\i}a}.
The third author is also supported by a grant from the Simons Foundation}
\begin{document}
%\date{\today}

\begin{abstract} The paper studies a relation between 
fundamental group of the complement to a plane singular curve
and the orbifold pencils containing it.
The main tool is 
%We describe a relation between 
%finite order characters of fundamental groups of 
%complements to plane singular curves, orbifold pencils 
%containing these curves, and 
the use of
 Albanese varieties of cyclic covers ramified along such curves.
%Existence of orbifold pencils have strong implications for the 
%structure of the fundamental group of the complement to the curve.
%As an example we exhibit a cyclic cover of the projective plane 
%which is an abelian surface isomorphic to the Jacobian of a curve of genus~2. 
Our results give sufficient conditions for a plane singular curve to belong to an
orbifold pencil, i.e. a pencil of plane curves with multiple fibers inducing a map 
onto an orbifold curve whose orbifold fundamental group is non trivial.
We construct an example of a cyclic cover of the projective plane which is an 
abelian surface isomorphic to the Jacobian of a curve of genus 2 illustrating the 
extent to which these conditions are necessary.
\end{abstract}

\maketitle

%\tableofcontents

%\section{}

%\input{high130118-intro.tex}

\section*{Introduction}

There is an interesting correspondence between the fundamental groups of 
the complement to plane algebraic curves and the structure
of the pencils, possibly with multiple fibers 
which one can associate with such curves.
For example, if a plane curve $C$ is composed of a pencil, i.e.
$C=\bigcup_{i=0}^{s} C_i$ where $C_i$ are zeros of sections $t_i$
in a 2-dimensional subspace $L$ of $H^0(\PP^2,\Oc(d))$, then 
for each $P \in \X:=\PP^2\setminus C$ there is a well defined
element $t_P \in \PP(L)$ such that $t_P(P)=0$ and 
the correspondence $P \rightarrow t_P$ gives a holomorphic 
map $\X\rightarrow \PP(L)\setminus\{T_i\}_{i=0}^s$,
where $T_i$ are the points of $\PP(L)$ corresponding to the sections~$t_i$.
This map induces a surjection $\pi_1(\X) \rightarrow \pi_1(\PP(L)\setminus\{T_i\}_{i=0}^s)$ 
and hence $\pi_1(\X)$ has a free group on $s$ generators as its quotient.

In a similar vein, the existence of pencils with {\it multiple} fibers containing $C$ 
(see section~\ref{sec-orbifold}) may have implications for the fundamental group even 
if $C$ is {\it irreducible}. For example, suppose that an irreducible curve 
$C \subset \PP^2$ belongs in a pencil having two multiple fibers of multiplicities $2$ 
and $3$, i.e., the equation $F$ of $C$ can be presented as $F=f^2+g^3$ where $f,g$ are 
homogeneous polynomials.
Then the rational map $\pi: \PP^2 \dashrightarrow \PP^1$
given by $\pi([x:y:z])=[f^2:g^3]$ induces a regular map of $\X:=\PP^2\setminus C$
onto $\PP^1\setminus \{(1,-1)\}$. This map can also be viewed as an orbifold map
whose source is $\X$ with a trivial orbifold structure and whose target
is the orbifold $\CC_{2,3}$ which is an affine line with two orbifold points with 
stabilizers of orders $2$ and $3$. Such a dominant map yields a surjection
of the fundamental group $\pi_1(X)$ onto the orbifold 
fundamental group (cf. \cite{AC-prep}, \cite[Prop.2.7]{ji-libgober-mw})
for which one has 
$\pi_1^{\orb}(\CC_{2,3})=\ZZ_2*\ZZ_3$ (isomorphic to $\PSL_2(\ZZ)$).
In the rest of the paper we call a map between orbifolds having a one-dimensional 
target an {\it orbifold pencil}. The classically studied pencils (whether rational
or irrational) are a special case of orbifold pencils.

Previous work \cite{ji-libgober-mw,acl-depth,Artal-ji-Tokunaga-pencils} has 
shown that sometimes the relation between the fundamental group of a curve complement $\X$
and its {\it orbifold} pencils can be reversed, namely, the structure of the fundamental 
group provides information on the existence of (rational) orbifold pencils on $\X$ but 
the relation between fundamental groups and orbifold pencils has several aspects not 
appearing in the context of ordinary pencils. 
If a curve has only nodes and ordinary cusps as its
singularities (or more generally, singularities called in \cite{ji-libgober-mw}
$\delta$-essential) then the positivity of the rank
of the abelianization of the commutator $\pi_1(\X)'/\pi_1(\X)''$ 
implies the existence of orbifold maps on $\X$ 
%coming from pencils with multiple fibers containing $C$ 
(see Section~\ref{prelimsect} for more precise statements).

In the present paper we consider the correspondence between orbifold pencils
and fundamental groups of possibly reducible curves
%$C=\bigcup_{i=1}^{r} C_i$ where 
$C$ which may have singularities much more general than ordinary 
cusps and nodes. Our main result (see Theorem~\ref{main}) describes a sufficient 
condition for the existence of orbifold pencils on $\PP^2$ containing~$C$ in terms of the 
fundamental group $\pi_1(\X)$ of its complement. Let us describe the 
results of the paper in more detail.

As in the case of curves with nodes and cusps only,
it is convenient to state our results in terms of
the Alexander invariants and the characters of the
fundamental group. The statements also use the local Albanese varieties
of singularities (cf. Section~\ref{prelimsect}). 
Recall (see more details in Section~\ref{prelimalpol}) that 
there is a notion of Alexander polynomial $\Delta_{C,\pi} \in \ZZ[t,t^{-1}]$
associated with a given surjection $\pi: \pi_1(\X)  \rightarrow \Gamma$
onto a cyclic group. 
Such a polynomial depends only on the quotient of $\pi_1(\X)$ by 
the commutator of ${\rm Ker} \pi$ and it contains information about the 
cohomology of rank one local systems on $\X$, namely, for $\chi \in \Hom(\Gamma, \CC^*)$ 
one has $H^1(\X,\chi) \ne 0$ if and only if, for a generator $\gamma$ of $\Gamma$, 
$\xi=\chi(\gamma)$ is a root of $\Delta_{C,\pi}$.
A root $\xi$ of the Alexander polynomial $\Delta_{C,\pi}$ can also be described
as an eigenvalue of the covering transformation $\tau_C$
acting on $H_1(V_C,\CC)$ where $V_C$ is a smooth model of the
cyclic cover of $\PP^2$ of degree $\deg C$ branched over $C$ (cf. \cite{li:82}).
Note that since $H^1(V_C,\CC)$ is a birational invariant, the  
eigenvalues of $\tau_C$ are independent
of a choice of the smooth model $V_C$. An alternative description 
of the multiplicity of the root $\xi$ can be given as 
the superabundance of the linear system of 
plane curves described in terms of the degree and 
the local type of the singularities of $C$. We refer to \cite{Li7} for details. 

The Alexander polynomial is affected by the local types of the
singularities of $C$ as was shown in~\cite{li:82}.
For the statement of our main results we will need 
a more precise than stated in \cite{li:82} version of 
this relation and it 
will be shown below in Section~\ref{mainproof}.

\begin{thm0}\label{singularityexist} 
Let $C$ be a plane curve with arbitrary 
singularities and let $\chi$ be a character of finite order $N>0$ 
%of order $d$
of the fundamental group $\pi_1(\X)$. 
Assume that $\chi$ is ramified along each
irreducible component of $C$. Assume also that $H^1(\X,\chi) \ne 0$.
Then there exists a singularity $P\in C$ with local equation $f_P(x,y)=0$
for which the following property holds.
 
Denote by $B_P$ a Milnor ball about $P$  
and let $\chi_P$ be the character 
of $\pi_1(B_P\setminus C)$
which is the composition
$$\pi_1(B_P  \setminus C) \rightarrow \pi_1(\X)
\overset{\chi}{\rightarrow} \CC^*$$
where the left map is induced by the 
inclusion $B_P  \setminus C\hookrightarrow \X$.
Then the corresponding map:
\begin{equation*}\label{surjectivitylocal}\tag{T1}
H^1(\X,\chi) \rightarrow H^1(B_P\setminus C,\chi_P)
\end{equation*}
has a non-trivial image (in particular $H^1(B_P\setminus C,\chi_P) \ne 0$).
\end{thm0}

The orbifold pencils on $\PP^2$ which we attach to the curve $C$ 
are obtained from irrational pencils on $V_C$ and are 
constructed using the Albanese map $V_C \rightarrow \Alb(V_C)$.
Albanese varieties of cyclic covers of $\PP^2$ were 
considered classically for covers of small degree 
(cf.~\cite{comessatti,defranchis,catanese} for a modern exposition).
The work of Comessatti~\cite{comessatti,catanese} studies
the irregular $3$-cyclic coverings of the plane, and he finds examples
both for Albanese dimensions~$1$ and $2$. In the latter case,
he constructs an example (also found in \cite{defranchis}
and thoroughly explained in \cite{catanese}) such that
the cyclic cover is the product of two copies of a special elliptic curve.
Bagnera and deFranchis take another viewpoint: they study rational cyclic
quotients of abelian surfaces. However, as presented in Theorem~\ref{singularityexist}, 
our focus is on $C$ and its algebraic/topological properties such as cohomology 
conditions on its complement.

Note that we obtain an explicit model
of such quotients in Theorem~\ref{thm-degt}; the ramification curve is 
described and we derived geometric properties of this curve from this fact. 
Our construction depends on the relation between $\Alb(V_C)$ and 
the invariants of singularities of $C$ described in  
\cite{cm}. There are also simple cases where 
such irrational orbifold pencils come up in a straightforward way. 
This is the case when $\Alb(V_C)$ is an elliptic curve, or analogously, for 
curves whose local Alexander polynomial equals $t^2-t+1$. 
More generally we have the following,
% if the image of the Albanese map
%is one-dimensional (see Theorem~\ref{oka} for classical examples)
%then the Albanese map yields an irrational pencil. 

\begin{cor0}[cf. {Theorem \ref{main}}]\label{corollarymain}
Let $C$, $\chi$ be as in Theorem{\rm~\ref{singularityexist}}
and let $\V{\chi}{C}$ be a smooth projective model of the cyclic cover 
associated with the kernel of $\chi$.
Assume that the Albanese dimension of $\V{\chi}{C}$ is equal to one
(see Theorem{\rm~\ref{oka}} for explicit examples).
Then $C$ is an element of a  
global quotient orbifold pencil such that $\chi$ is the pullback 
of a character of the orbifold fundamental group of the target of this 
orbifold pencil. 
\end{cor0}

We want to  relax the assumption on Albanese dimension in the  Corollary{\rm~\ref{corollarymain}}
and assume only that {\it one has a one-dimensional image in one of the isogeny 
$\chi$-equivariant factor of $\Alb(V_C)$}.
%admits an isogeny factor (which may be proper or not) where $\chi$ acts and it may be enough
%to have one-dimensional image on this factor. 
In what follows, we will describe how, under some restriction on the analytic type of the singularities of~$C$, 
we may identify the abelian varieties which are the isogeny $\chi$-equivariant factors of $\Alb(V_C)$ 
projection onto which may lead to construction of an orbifold pencil.

This restriction on the analytic type of singularities is 
given in terms of introduced in \cite{ji-libgober-mw} the local Albanese varieties 
associated with a plane curve singularities (cf. section
\ref{prelimsect}  for definition.)
%Local Albanese variety can be viewed as a {\it refinement} 
%of the topological information since, 
%being defined in terms of the associated Hodge structure,  
%it retains some information about the analytic type of singular point.
A local Albanese variety is equipped with an automorphism
i.e. a $\ZZ$-action 
coming from the action of the semi-simple part of the local monodromy
on the homology of the Milnor fiber.
The relation between local Albanese varieties of singularities and 
global information about $C$ comes from canonical maps
of each local Albanese variety into $\Alb(V_C)$.  
The sum of these maps over all singularities of $C$  
surjects onto $\Alb(V_C)$ (cf. \cite{cm}).
These maps from the local Albanese varieties of the singularities 
of $C$ are $\ZZ$-equivariant with respect to the just mentioned 
monodromy action and the action of the (cyclic) covering group of~$V_C$.

%In~\cite{cm} a class of singularities was introduced,
%called singularities of $\CM$-type, consisting of plane curve 
%singularities for which the local Albanese variety has a $\CM$-type. 
%One of the main results in \cite{cm} is that the Albanese
%variety of $V_C$ is isogenous to the product of isogeny factors of
%the local Albanese varieties (cf. section~\ref{prelimsect})
%with the isogeny induced by the canonical maps of local Albanese varieties 
%into $\Alb(V_C)$. 
%If $C$ has singularities of $\CM$-type, then 
%$\Alb(V_C)$ is an abelian variety of $\CM$-type which is 
%topological in the sense that it is constant in the
%equisingular deformations of $C$. 
%The singularities of $\CM$-type
%include all uni-branched singularities and singularities
%for which the local monodromy does not have multiple eigenvalues (cf. \cite{cm}).

Before stating the main result of this paper (Theorem~\ref{main}) 
we shall state sufficient conditions for existence of orbifold pencil 
in the case when singularties of $C$ have type $\AAA_{p-1}$ 
and 
for which fewer technical assumptions can be made.

\begin{thm0}\label{mainspecialcase}  
Let $C$, $\chi$, $P$, and $\V{\chi}{C}$ be as in 
Theorem{\rm~\ref{singularityexist}} and Corollary{\rm~\ref{corollarymain}} above.
%Let $C$, $\chi$, and $\V{\chi}{C}$ be as in Theorem{\rm~\ref{singularityexist}}
%and let $P$ be a singular point of $C$ with properties described in that that theorem.
Assume that $C$ has at $P$ an $\AAA_{p-1}$-singularity, $p$ an odd prime, and in particular, 
the local Albanese variety $\Alb_P$ is the Jacobian of the curve $D$ of genus~$g:=\frac{p-1}{2}$.
Let $\alb_{{\chi},D}: \V{\chi}{C} \rightarrow \Jac(D)$ be the composition 
of the Albanese map $\V{\chi}{C} \rightarrow \Alb(\V{\chi}{C})$ with the projection 
on its isogeny component~$\Jac(D)$. 

\smallskip 
If the image of $\alb_{{\chi},D}$ has dimension one, then there is a pencil
$\V{\chi}{C} \rightarrow D$ inducing an orbifold pencil
\begin{equation*}\tag{T3}\label{mainspecialcasepencil}
     \X \dasharrow D/\im\chi
\end{equation*}
onto the global quotient of $D$ by the canonical action of $\im \chi$ on~$D$. 

Moreover the character $\chi$ is the pullback on $\pi_1(\X)$ of a 
character of $\pi_1^{\orb}(D/\im\chi)$ via the pencil~\eqref{mainspecialcasepencil}.
\end{thm0}

Now we are ready to state the main result of the paper 
with milder than in Theorem \ref{mainspecialcase} restriction 
on singularities of $C$ but similar conclusion that global 
orbifold pencils exist.

\begin{thm0}\label{main}
Let $C$, $\chi$, $N$ and $P$  be as in Theorem{\rm~\ref{singularityexist}}.
Let $\V{\chi}{C}$ be a smooth projective model of the cyclic
branched cover of $\PP^2$
%of degree equal to $\deg C$ and branched over $C$
associated with the kernel of $\chi$ and let $\tau_C^{\chi}$ be the map
induced by the deck transformation on $H_1(\V{\chi}{C},\CC)$.

%and $\V{\chi}{C}$ be as in Theorem{\rm~\ref{singularityexist}}.
%and let $P$ be a singular point of $C$ with properties described in it.

\begin{enumerate}
\item\label{componentj}
Assume that the local Albanese variety $\Alb_P$ of the singularity $P$
has an isogeny component $J_{\chi}$ satisfying the following:
\begin{enumerate}
\makeatletter\renewcommand{\p@enumii}{}\makeatother 
\item\label{componentjchi}
The action of $\im \chi$ on $\Alb_P$ induces an action on 
$J_{\chi}$ and the map $J_{\chi} \rightarrow \Alb(\V{\chi}{C})$ 
induced by the $(\im \chi)$-equivariant map $\Alb_P \rightarrow \Alb(\V{\chi}{C})$ 
has a finite kernel. 
 \item\label{jacobianassumption}
$J_{\chi}$ is the Jacobian of a curve $D$ such that $D$ is a quotient 
of an exceptional curve $\mathcal D$ of positive genus in a resolution 
of the singularity $z^N=f_P(x,y)$ i.e. $D=\mathcal D/\Delta({\mathcal D},\chi)$ 
where $\Delta({\mathcal D},\chi)\subseteq \im \chi$ 
is a (possibly trivial) subgroup of the covering group $\im \chi$,
the latter being considered as an automorphism group of~$\mathcal D$.
\end{enumerate}
%\item\label{pencilitem}
 Let $\alb_{{\chi},D}$ be the composition of the Albanese map
$\V{\chi}{C} \rightarrow \Alb(\V{\chi}{C})$ with the projection
on the factor $J_{\chi}=\Jac(D)$. If the dimension of the image of
$\alb_{{\chi},D}$ is one, then there exists a pencil
\begin{equation*}
   \V{\chi}{C} \rightarrow D
\end{equation*}
inducing an orbifold pencil 
\begin{equation*}
%\RightEqNo
\label{mainorbpencil}\tag{T4}
\X \dasharrow D^{\orb}_{\im \chi}
\end{equation*}
where $D^{\orb}_{\im \chi}=D/({\im \chi}/\Delta(D,\chi))$ is the 
global quotient orbifold obtained 
via the induced action of $({\im \chi}/\Delta(D,\chi))$ on $D$.
For such an orbifold pencil \eqref{mainorbpencil} the character~$\chi$ 
is the pull-back on $\pi_1(\X)$ of a character
of $\pi_1^{\orb}(D^{\orb}_{\im \chi})$ via~\eqref{mainorbpencil}.

\item\label{simple}
If $\Alb_P$ 
%the local Albanese of a singularity $P$ in (\ref{singularityexist})
is simple (i.e. is not isogenous 
to a product of abelian varieties of positive dimension) 
then the assumptions \eqref{componentjchi} and \eqref{jacobianassumption} in 
\eqref{componentj} are automatically satisfied.

\end{enumerate}
\end{thm0}

Note that assumption \eqref{componentj}\eqref{componentjchi} means that 
$J_{\chi}$ is an $({\im \chi})$-equivariant isogeny
component of $\Alb(\V{\chi}{C})$. In particular it implies that 
the tangent space to $J_{\chi}$ at the identity is 
contained in the $\chi$-eigenspace
of $\tau _C^{\chi}$ acting on the tangent space of $\Alb(\V{\chi}{C})$
at the identity,

The conditions for existence of orbifold pencils given by this 
theorem have the following converse showing 
that the existence of an orbifold pencil
having the curve $C$ as a member, implies that the Albanese variety
of the corresponding cyclic cover splits up to isogeny.
Some factors of this splitting are the Jacobians of the curves 
with the orbifold associated with the pencils being the global quotients 
of these curves.

More precisely (see section~\ref{sec-orbifold} for definitions related
to orbifold pencils) one has:

\begin{thm0}\label{main2}
Suppose that $C$ belongs to a global quotient orbifold pencil $\pi$
(cf. Definition{\rm~\ref{orbpencil}})
of target $\PP^1$ with orbifold points of multiplicities ${\bar m}=(m_1,\dots,m_s)$
so that $\pi$ induces a homomorphism 
$\pi_1(\X) \rightarrow \pi_1^{\orb}(\PP^1_{\bar m})$.
Assume also that there is $\rho \in \Char \pi_1^{\orb}(\PP^1_{\bar m})$
such that $\chi=\pi^*(\rho)$ and also that
the orbifold $\PP^1_{\bar m}$ is a global quotient of a curve $\Sigma$.
Then $\Alb(\V{\chi}{C})$ admits an $(\im\chi)$-equivariant
surjection onto~$\Jac(\Sigma)$
and hence one has an $(\im\chi)$-equivariant
isogeny $\Alb(\V{\chi}{C})\sim \Jac(\Sigma) \times A$
for an abelian $(\im\chi)$-variety~$A$.

More generally, if there is a finite number $\phi_1,\dots,\phi_n$ of global
quotient orbifold pencils as above
with targets $(\PP^1_{\bar m},\rho)$ ($\rho \in {\rm Char}
\pi_1^{\orb}(\PP^1_{\bar m})$
which are \emph{$\QQ$-strongly independent},
then $\Alb(\V{\chi}{C})$ admits an
$(\im\chi)$-surjection onto~$\Jac(\Sigma)^n$, that is there is an equivariant
isogeny, $\Alb(\V{\chi}{C})\sim \Jac(\Sigma)^n \times A$ for
an abelian $(\im\chi)$-variety~$A$.

%Moreover, if the singularities
%of $C$ are of CM type, then $\Jac(D)=J_\chi$ is a
%component of the local Albanese variety at some singularity~$P$ of~$C$.

\end{thm0}

The proofs of Theorems~\ref{singularityexist},~\ref{mainspecialcase},~\ref{main} 
and Theorem~\ref{main2} are presented in Section~\ref{mainproof}.
In Section~\ref{examples} we consider applications
of Theorem~\ref{main}. Firstly we discuss an example
of a curve $C$ with $\AAA_{2g}$-singularities, i.e. whose singularities are
locally isomorphic to $u^2+v^{2g+1}=0$, which belongs to an orbifold pencil.
For the curves described in Theorem~\ref{oka},
all roots of the Alexander polynomial correspond to 
orbifold pencils on the complement. The Albanese variety
of the canonical cyclic cover $\V{}{C}$ is the Jacobian
of a certain curve of genus~$g$ (described as a Belyi cover).
In Theorem~\ref{thm-degt} we give an example of a curve
for which the Albanese variety is the same as one of those in Theorem~\ref{oka}
(for the particular case of $g=2$), but whose characters corresponding to the
roots of the Alexander polynomial 
cannot be obtained as pull-back via orbifold maps.
The difference between the curves in Theorems~\ref{oka} and~\ref{thm-degt} 
comes from the difference in the Albanese maps of the corresponding
cyclic covers, namely, the images of the Albanese maps have different dimensions.
The curve given in explicit way 
described in Theorem~\ref{thm-degt} is particularly interesting, since its
canonical cyclic cover has as a minimal model an abelian surface
(specifically the Jacobian of a curve of genus~2 cf. 
also \cite{catanese}).
This construction of an abelian surface via cyclic coverings
branched over curves given by explicit equation 
can be of independent interest.
Finally, in Theorem~\ref{ji} we present a family of curves contained in more than 
one orbifold pencil and for which the Albanese dimension is maximal, that is, two.

\numberwithin{equation}{section}

\begin{section}{Preliminaries}\label{prelimsect}

\subsection{Alexander polynomials}\label{prelimalpol}(cf. \cite{li:82})

Let $C$ be a plane curve with irreducible components $C_0,C_1,\dots,C_r$
where $F_i(x,y,z)=0$ is a reduced equation of~$C_i$ of degree~$d_i$. 
Then $H_1(\X,\ZZ)$ is an abelian group of
rank~$r$ isomorphic to $\ZZ^{r+1}/{(d_0,\dots,d_{r})\ZZ}$.
This isomorphism is given by 
$$
\gamma \mapsto\left(\frac{1}{2 \pi \sqrt{-1}}\int_\gamma \frac{dF_i}{F_i}\right)_{i=0}^r.
$$
Fix a surjection $\pi: \pi_1(\X) \rightarrow \Gamma$
onto a cyclic group $\Gamma$. Note that $\pi$ can be factored 
through $H_1(\X,\ZZ)$ and hence induces a homomorphism 
$\ZZ^{r+1}/{(d_0,\dots,d_{r})\ZZ}\to \Gamma$.
% The common choice is the homomorphism
% $\gamma \rightarrow \int_{\gamma}\frac{dF}{F}$
% where $F$ is the equation of $C$. Note if $\gcd (d_1,\dots,d_r)=d$ then the latter integral,
% being well defined modulo periods,
% takes values in $\ZZ/d\ZZ$ and that the later group has prefer generator
% (the image of $+1$).
Let $K=\ker \pi$. Consider the exact sequence 
$0 \rightarrow K/K'\rightarrow \pi_1(\X)/K' \rightarrow \Gamma \rightarrow 0$ and
the corresponding action of $\Gamma$ on $K/K' \otimes \CC$.
The \emph{Alexander polynomial} $\Delta_{C,\pi}(t)$ of $C$
(relative to the surjection $\pi$)
is the characteristic polynomial associated with the action of $\Gamma$ 
on the vector space $K/K' \otimes \CC$. Note that $\dim K/K' \otimes \CC
<\infty$ (cf. \cite{li:82}), $\Delta_{C,\pi}$ has
integer coefficients and in the case of irreducible
$C$ is independent, for all the previous choices, as an element in $\CC[t,t^{-1}]$ modulo
units. In the latter case, if $\Gamma=\ZZ/d\ZZ$, then $K/K'$ is the abelianization of the
commutator of~$\pi_1(\X)$.

Zeroes of the Alexander polynomial can be described in terms
of the cohomology of local systems as follows. Note that, since
$\pi_1(\X)/K'$ is abelian, $\pi$ factors through a character, say $\chi$.
Let $\xi \in \CC^*, (1,...,1) \in
\ZZ^{r+1}/{(d_0,\dots,d_{r})\ZZ}=H_1(\X,\ZZ)$ be
a generator of $\im\chi \in \CC^*$; one has:
\begin{equation}\label{rootslocalsyst}
\Delta_{C,\pi}(\xi)=0  \Longleftrightarrow
\dim H^1(\X,\chi)>0\quad
\text{(cf. \cite{eh:97,li:01}).}
\end{equation} 

The Alexander polynomial is restricted by the local type of singularities
and the degree of $C$ as follows. Each singularity $P \in C$,
has associated its local Alexander polynomial $\Delta_{C}^{P}$, or equivalently
the characteristic polynomial of the local monodromy acting on
the Milnor fiber of the singularity (cf. \cite{mil}).
Then one has the divisibility relation (cf. \cite{li:82})
\begin{equation}\label{divisibility}
    \Delta_C(t) \vert \Pi_P \Delta^P_C(t).
\end{equation}
Moreover the roots of the Alexander polynomial are roots of
unity of the degree $\deg C$.

\begin{example}\label{2pexample} Let $C$ be a curve whose singularities
are topologically equivalent to the $\AAA_{2g}$-singularity 
with local equation $u^2=v^{2g+1}$.
Since the characteristic polynomial of the monodromy
for such singularity is $ \frac{t^{2g+1}+1}{t+1}$
the Alexander polynomial of $C$ is trivial unless $2({2g+1}) \vert \deg C$ and
moreover it is equal to $\left(\frac{t^{2g+1}+1}{t+1}\right)^s$
for some $s \ge 0$.
\end{example}

\bigskip
\noindent

\subsection{\texorpdfstring{Local Albanese Varieties and                        
singularities of $\CM$-type}{Local Albanese Varieties and                       
singularities of CM-type}}\label{localalbsubsect}
\mbox{}

Let $f=0$ be a germ of an isolated (i.e. reduced) plane curve
singularity at the origin. Let $M_f$ be the Milnor fiber of $f$, i.e. the
intersection of a sufficiently small ball $B_{\epsilon}$ about the origin
and the hypersurface $f=t, 0<|t| \ll \epsilon$. The cohomology of
$M_f$ (more generally, the cohomology of the Milnor fiber
of an isolated hypersurface singularity)
supports {\it the limit} mixed Hodge structure.  It was constructed by
Steenbrink and we refer to  \cite{st:77} for its study. Here 
we only note that it depends on the {\it family}  of 
germs $f=t$, rather than its specific member and  
record the following properties
of this mixed Hodge structure used below:

\begin{enumerate}
\item
It has weight $2$ and the weight filtration is associated with the
unipotent part of the monodromy $T_u$ in the decomposition
into unipotent and semisimple parts of $T=T_sT_u$, the monodromy 
operator acting on~$H^1(M_f,\CC)$.

\item
The size of Jordan blocks of the monodromy operator is
at most 2 and equals $\rk W_0$. Moreover,
$$\rk\Gr^W_2 - \rk W_0=r-1,$$
where $r$ is the number of branches of $f=0$.

\item
The Hodge filtration is invariant under the action of
the semisimple part of the monodromy. Note that by the Monodromy
Theorem the order of $T_s$ is finite (cf. \cite{st:77}).

\item
Let $L_f$ be the link of the singularity $z^n=f$ where
$n$ is the order of the automorphism $T_s$.
Then
\begin{equation}\label{uptocover}
\Gr^W_1H^1(M_f)=
\Gr_3^WH^2(L_f)(1)
\end{equation}
(where $H(1)$ is the Tate twist of a Hodge structure $H$, cf. \cite[Proposition~3.1]{cm}).
\end{enumerate}

\begin{defn} The \emph{local Albanese variety} of the germ $f$ is defined as the abelian
variety $(\Gr_F^0H^1(M_f))^*/H_1((M_f,\ZZ)$, with polarization
induced by the intersection form on $H_1(M_f,\ZZ)$. Equivalently,
the local Albanese is the abelian part of the semiabelian
variety associated by Deligne (cf. \cite{deligne:74}) to
the 1-motif in the case of the mixed Hodge structure
dual to the limit mixed Hodge structure discussed above.
\end{defn}

The above definition is rather technical but it admits a simpler description
in terms of the resolution of the singularity $z^n=f$ discussed above. Let $\tilde B
\rightarrow B$ be an embedded log-resolution of the germ $f=0$,
$V_n \rightarrow B$ be the projection of the germ of the singularity
$z^n=f$ onto $B$. The singularities of the normalization
of the fiber product $S=V_n \times_B \tilde B$ are cyclic
quotient singularities and their (minimal) resolution
$\widetilde {S}$ provides a
resolution of the singularity of the germ $z^n=f$ (cf.
\cite{lipman:74}). In this resolution the boundary of the tubular
neighborhood of the exceptional locus can be identified with
the link $L_f$ in (\ref{uptocover}). Moreover $H^1(L_f)$
(and by duality $H^2(L_f)$) can be identified in an appropriate
way with $\bigoplus H^1(E_i)$ where $E_i$ runs through the
set of exceptional curves in $\widetilde {S}$
having a {\it positive genus}. More precisely, we have:

\begin{thm}\label{productofjactheorem}
{\rm{(cf.~\cite[Theorem 3.11]{cm}\rm)}}
Let $f(x,y)=0$ be a singularity with a semi-simple
monodromy and let $N$ be the order of the monodromy operator.
The Albanese variety of the germ $f(x,y)=0$
is isogenous to a product of Jacobians of the exceptional
curves of positive genus for a resolution of
\begin{equation}\label{cycliccoversing}
z^N=f(x,y).
\end{equation}
\end{thm}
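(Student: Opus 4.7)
The plan is to chain together the three pieces of data already recorded in the excerpt: (i) the weight filtration on the limit mixed Hodge structure on $H^1(M_f)$; (ii) the identification \eqref{uptocover} of $\Gr_1^W H^1(M_f)$ with $\Gr_3^W H^2(L_f)(1)$; and (iii) the structure of the resolution $\widetilde S \to (z^N=f)$ which identifies $H^2(L_f)$ with $\bigoplus H^1(E_i)$ on the positive-genus exceptional components.

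First I would reduce to the pure weight-$1$ part. Since the local Albanese is by definition the abelian variety attached to $(\Gr^0_F H^1(M_f))^*/H_1(M_f,\ZZ)$, only the weight-$1$ pure piece of $H^1(M_f)$ contributes the abelian variety structure; the weight-$2$ piece is of Hodge-Tate type and contributes only the toric part of the associated semiabelian variety. Under the standing hypothesis that the monodromy is semi-simple, $T_u=\mathrm{id}$, so property~(2) forces $W_0=0$ and hence $W_1=\Gr_1^W H^1(M_f)$ is a polarized pure Hodge structure of weight $1$. Thus the local Albanese variety is, as a polarized abelian variety, the one associated with $\Gr_1^W H^1(M_f)$ and its intersection pairing.

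Next I would invoke \eqref{uptocover} to rewrite $\Gr_1^W H^1(M_f)\cong \Gr_3^W H^2(L_f)(1)$, where $L_f$ is the link of $z^N=f$, and then compute the right-hand side using the resolution $\widetilde S \to (z^N=f)$ obtained from the embedded log-resolution $\widetilde B \to B$ of $f=0$. The link $L_f$ is the boundary of a tubular neighborhood $T$ of the exceptional divisor $E=\bigcup E_i\subset \widetilde S$. Applying the Mayer--Vietoris/Clemens--Schmid type spectral sequence for the normal crossing divisor $E$ (or equivalently the Leray spectral sequence for the inclusion $L_f \hookrightarrow T\setminus E$ combined with the Deligne mixed Hodge structure on the cohomology of the dual complex of $E$), one finds that the weight-$3$ piece of $H^2(L_f)$ is precisely $\bigoplus_i H^1(E_i)(-1)$, where the sum runs over the components of strictly positive genus (components isomorphic to $\PP^1$ contribute nothing to $H^1$, and the combinatorial part of the dual graph lives in weights $2$ and lower on $H^2(L_f)$). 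After Tate twisting by $(1)$ this becomes $\bigoplus_i H^1(E_i)$ as polarized pure Hodge structures of weight~$1$.

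Assembling these steps yields a $\ZZ$-equivariant isomorphism of rational Hodge structures $\Gr_1^W H^1(M_f)\otimes \QQ \cong \bigoplus_i H^1(E_i,\QQ)$, compatible with polarizations up to a nonzero scalar, and hence an isogeny between the local Albanese and $\prod_i \Jac(E_i)$. The main obstacle is the precise identification of $\Gr_3^W H^2(L_f)$ with $\bigoplus H^1(E_i)(-1)$: this is the nontrivial Hodge-theoretic input, requiring a careful analysis of the weight filtration on the cohomology of the boundary of a regular neighborhood of a normal crossing divisor on a smooth surface, which is where the work of \cite{cm} enters. Once this identification is in hand, the matching of polarizations, and therefore the isogeny of abelian varieties, follows from the functoriality of the constructions on both sides.
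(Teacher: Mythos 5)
Your proposal is correct and follows essentially the same route as the paper, which states this result as a quotation of \cite[Theorem 3.11]{cm} and only sketches the argument in the preceding paragraph: reduce to the pure weight-one part of the limit mixed Hodge structure (semi-simplicity forcing $W_0=0$), pass to $\Gr_3^W H^2(L_f)(1)$ via \eqref{uptocover}, and identify that with $\bigoplus H^1(E_i)$ over the positive-genus exceptional curves of the resolution of $z^N=f$. Your expansion of that sketch, including the honest flagging of the link computation as the nontrivial input supplied by \cite{cm}, is accurate.
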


The latter description suggests an approach to defining the
local Albanese for the non-reduced case, i.e. as
the product of the Jacobians of curves of positive genus
in the resolution of the singularities of the germ~$z^n=f$.

Finally recall the following:

\begin{defn}(cf.~\cite[Definition~3.4]{cm}) 
 A plane curve singularity \emph{has a $\CM$-type} if its local 
 Albanese variety is an abelian variety of $\CM$-type.
A plane curve singularity \emph{has a $\CM$-type} if its local 
Albanese variety is isogenous to a product of simple abelian varieties of $\CM$-type.
\end{defn}

We refer to~\cite{Shimura} for basic information regarding
abelian varieties of $\CM$-type.
Unibranched singularities and singularities for
which the characteristic polynomial of the monodromy operator
has no multiple roots provide many examples of singularities
of $\CM$-type (cf.~\cite{cm}).

\begin{example}\label{ex:belyi}
Let $(C,P)$ be a simple curve singularity of type $\mathbb{A}_{2 g}$, with local equation
$y^2-x^{2 g+1}=0$. The local Albanese variety is associated to the surface singularity
$y^2-x^{2 g+1}=z^{2(2g+1)}$. For any resolution of 
this surface singularity, there is only one non-rational irreducible component 
$D_{\mathbb{A}_{2 g}}$ of its exceptional divisor,
which is a Belyi cover of the unique branching component of the minimal resolution of $(C,P)$,
ramified at the three intersection points with the other components, with ramification indices
$2,2 g+1, 2(2 g+1)$, whose genus is~$g$.
\end{example}

\subsection{Orbifold Pencils}\label{sec-orbifold}
\begin{defn} Let $X$ be a quasi-projective manifold and $S$ be an orbicurve (one-dimensional orbifold). 
A holomorphic map $\phi$ between $X$ and the underlying $S$ complex curve 
we shall call an 
\emph{orbifold pencil} if the index of each orbifold point $p$ divides the multiplicity of each connected 
component of the fiber $\phi^*(p)$ over~$p$.
\end{defn}
We will concentrate our attention on orbifold pencils of curve complements. Let $C\subset \PP^2$ be a plane 
curve (not necessarily irreducible) and let $\X$ denote its complement. Consider an orbifold pencil 
$\phi:\X\to S$, where $S$ is a rational orbifold curve (that is, its compactification is $\PP^1$)
given by a finite number of orbifold points, say $P_i$, $i=1,\dots,s$, with orbifold structure of order
$m_i\in \ZZ_{>0}\cup \{\infty\}$, $i=1,\dots,s$ 
(i.e. near which the orbifold chart is the chart given by a disk with 
the standard action of the cyclic group of order $m_i$). 
For convenience, $m_i=\infty$ means that $P_i$ has been
removed from $S$, namely, $S=\PP^1\setminus \{P_i\mid m_i=\infty\}$. In the future we will denote $S$ 
simply by~$\PP^1_{\bar m}$, where~$\bar m:=(m_1,\dots,m_s)$.

\begin{defn}\label{orbpencil}
In the situation as above, we say that $C$ \emph{belongs to an orbifold} pencil of type $\bar m$. 
Moreover, the orbifold pencil $\phi$ will be called
 a \emph{global quotient orbifold pencil} if there exists a morphism 
$\Phi: X_G \rightarrow \Sigma$, where $X_G$ is a quasi-projective manifold endowed with
an action of a finite group $G$ and $\Sigma$ a curve which makes the diagram
\begin{equation}\label{diagramorbchar}
\begin{tikzpicture}[description/.style={fill=white,inner sep=2pt},baseline=(current bounding box.center)]
\matrix (m) [matrix of math nodes, row sep=2.5em,
column sep=2.5em, text height=1.5ex, text depth=0.25ex]
{X_G & \Sigma \\
\X& \PP^1_{\bar m}\\};
%\path[->,font=\normalsize]
\path[->,>=angle 90](m-1-1) edge node[auto] {$\Phi$} (m-1-2);
\path[->,>=angle 90](m-2-1) edge node[auto] {$\phi$} (m-2-2);
\path[->,>=angle 90](m-1-1)edge node[auto,swap] {}(m-2-1);
\path[->,>=angle 90](m-1-2)edge node[auto,swap] {}(m-2-2);
\end{tikzpicture}
\end{equation}
commutative, for which the vertical arrows are the models for the quotients by the action of $G$.

If in addition, there is a character $\chi\in \Char(\X)$ and a
character $\rho\in \Char^{\orb}(\PP^1_{\bar m})$ such that $\chi=\rho\circ \pi$,
and $X_G$ (resp. $\Sigma$) is the covering of $\X$ (resp. $\PP^1_{\bar m}$) associated
with the character $\chi$ (resp. $\rho$), then we say $(C,\chi)$ \emph{belongs to a global
quotient orbifold pencil with target $(\PP^1_{\bar m},\rho)$}.
\end{defn}

Oftentimes, the set of global quotient orbifold pencils --up to the obvious equivalence
by automorphisms of the target-- is infinite (see~\cite{ji-libgober-mw,acl-depth}). A very useful (cf. Theorem \ref{ji} below)
property to determine the different nature of such orbifold pencils is given by the
following.

\begin{defn}
\label{def-indep}
Global quotient orbifold pencils $\phi_i:(\X,\chi) \rightarrow (\PP^1_{\bar m},\rho)$, $i=1,\dots,n$
are called \emph{independent} if the induced maps $\Phi_i: X_G \rightarrow \Sigma$
define $\ZZ[G]$-independent morphisms of modules
\begin{equation}
\label{eq-independent}
{\Phi_i}_*: H_1(X_G,\ZZ) \rightarrow H_1(\Sigma,\ZZ),
\end{equation}
that is, independent elements of the $\ZZ[G]$-module $\Hom_{\ZZ[G]}(H_1(X_G,\ZZ),H_1(\Sigma,\ZZ))$.

In addition, if 
\begin{equation}
\label{eq-strongly}
\bigoplus {\Phi_i}_*: H_1(X_G,\ZZ) \rightarrow H_1(\Sigma,\ZZ)^n
\end{equation}
is surjective we say that
the pencils $\phi_i$ are \emph{strongly independent}. 
If the previous morphism \eqref{eq-strongly} is considered with 
coefficients over $\QQ$, then we will use the term $\QQ$-strongly independent.
\end{defn}
\end{section}

\begin{section}{Proof of theorems \ref{singularityexist},
\ref{mainspecialcase}, 
\ref{main} and \ref{main2}}\label{mainproof}

\begin{proof}[Proof of Theorem {\rm~\ref{singularityexist}}]
We shall use notations set up in the Introduction and in Section~\ref{prelimsect} 
and consider the Alexander polynomial $\Delta_{C,\chi}(t)$
of $C$ relative
to the homomorphism $\chi: \pi_1(\X) \rightarrow \Gamma \subset \CC^*$
where $\Gamma=\im \chi$ is the group of $N$-th roots of unity by hypothesis. 
Let $\xi \in \CC^*$ be a primitive $N$-th root of unity. 
Since $H^1(\X,\chi)\ne 0$ one has
$\Delta_{C,\chi}(\xi)=0$ (cf.~\eqref{rootslocalsyst}).
Let $S_\chi:=\{P\in\Sing(C)\mid H^1(B_P\setminus C,\chi_P)\ne 0\}$;
because of~\eqref{divisibility}, this set is non-empty.

For each $P\in S_\chi$,
consider the unbranched covering
of $E_P:=B_P\setminus C$ corresponding to the surjection
$\pi_1(E_P) \rightarrow \Gamma$ and denote it by
 $(\widetilde{E}_P)_{\Gamma}$. 
Then the restriction
of the cyclic cover of $B_P$ given by the equation
\eqref{cycliccoversing} on
$E_P$
%{(x,y)\in B_P \vert f(x,y) \ne 0 \}$
is equivalent to $(\widetilde{E}_P)_{\Gamma}
\rightarrow E_P$. 
The proof of the Divisibility Theorem 
(cf.~\cite{li:82,cm}) also shows that that there is a surjection
$\bigoplus_{P\in S_\chi} H_1((\widetilde{E}_P)_{\Gamma},\QQ)_\xi
\rightarrow H_1(V_C^\chi, \QQ)_\xi$, where the subindex $\xi$ 
stands for the $\xi$-eigenspace of the corresponding deck transformations.
Hence one can take as $P$ in \eqref{singularityexist}
any singular point in $S_\chi$ for which the map 
$H_1((\widetilde{E}_P)_{\Gamma},\QQ)_\xi
\rightarrow H_1(V_C^\chi, \QQ)_\xi$ has a non trivial image.
% Such a $\xi$ exists since otherwise $\Delta_{C,\pi}(\xi)\ne 0$.
% Now the part \eqref{singularityexist} follows 
% from relation \eqref{rootslocalsyst} between the roots 
% of Alexander polynomial and cohomology of local systems. 
% 
%More precisely, for each singularity 
%Let $n$ be the
%order of $\Gamma$, $f_i=0$ be the equations of the irreducible
%components of the germ $C \cap B_P$, $n_i$ be the ramification
%order of $\chi$ along $f_i=0$ and $n=\Pi_in_i$. 
\end{proof}

\begin{remark}
In fact $H^1(B_P\setminus C,\chi_P)\ne 0$ is not enough to 
ensure that the map~\eqref{surjectivitylocal} in Theorem~\ref{singularityexist} has a non-trivial image.
For instance, consider $C$ a sextic curve
with seven ordinary cusps. It is well known 
(already to O.Zariski, cf.~\cite{Artal-couples,li:82,ji-libgober-mw} 
for more recent discussions) that there is a conic passing 
through six out of the seven cusps. The Alexander polynomial of $C$ is $t^2-t+1$, 
which coincides with the local Alexander polynomials of its singularities.
However, if $\chi$ is a character of order 6, the map 
$$
H^1(\X,\chi) \rightarrow H^1(B_P\setminus C,\chi_P)
$$
is not trivial if and only if $P$ is one of the six cusps on the conic.
\end{remark}

\begin{proof}[Proof of Theorem {\rm~\ref{main}}]
Now let us assume that $P$ is a singularity satisfying 
Theorem~\ref{main}
and consider a resolution of the associated surface singularity $V_P=\{z^N=f_P(x,y)\}$,
where $N$ is the order of the character~$\chi$ 
and $f_P$ is a local equation of $C$ near $P$.
%$N={\rm Card} \Gamma, \Gamma={\im \chi}$).
Recall that such a resolution can be obtained (Jung's method 
cf. \cite{lipman:74}) by normalizing 
a pull-back of an embedded resolution of the singularity at~$P$.
 
It follows from the A'Campo formula (cf. \cite{ac:75}),
or from discussion in Section~\ref{localalbsubsect},
that $\xi$ is the root of
the characteristic polynomial of the transformation
induced on homology by the action $z \mapsto \xi z$ 
on a resolution of singularities of the surface $V_P$
and restricted 
to one of the curves
of positive genus in the resolution of the singularity~$V_P$.
Denote such a curve by~$\mathcal D$. 
Jung's procedure implies that , 
$\mathcal D$ is an irreducible component of a $\Gamma$-cover of a rational
curve (namely an exceptional divisor of the resolution of~$P$).
By Theorem~\ref{productofjactheorem} (i.e.
\cite[Theorem 3.11]{cm}) there is an isogeny 
component of the Jacobian of $\mathcal D$ (possibly a direct 
sum of several simple components) 
which is also an isogeny component
of $\Alb(\V{\chi}{C})$. If this component is an $(\im \chi)$-invariant Jacobian
of a curve $D$, i.e. if the assumption~\eqref{jacobianassumption} 
in Theorem~\ref{main} 
is fulfilled, then by Torelli's Theorem $\im \chi$ acts 
on $D$ as well 
(unfaithfully if $\mathcal D \ne D$). 
Note that Theorem~\ref{main} allows
non-reduced curves $f=0$, which are excluded in the statement of
Theorem~\ref{productofjactheorem}. 

As a consequence of Jung's method, the resolutions of $z^n=f$ and $z^n=f_{\red}$, where $f_{\red}$ is 
the product of irreducible factors of $f$, are both obtained by pull-back and normalization
of the same embedded resolution
of the curve $f_{\red}=0$. 
% Both retract onto a union of cyclic covers of the same rational curves 
% with the same ramification
% sets, but with eventually different ramification data of the covers.
In particular the conclusions of Theorem~\ref{main}\eqref{componentj} still hold in the non-reduced 
case, whereas $D$ depends on the ramification data of the cyclic cover~$V_P$.

Returning to the proof of the existence 
of an orbifold pencil satisfying~\eqref{componentj}, 
suppose that the composition of the Albanese map and the projection
onto $\Jac(D)$ has a 1-dimensional image~$W$. Let $\sigma:\mathcal{D}\to D$ be the quotient map. Consider the diagram
\begin{equation}
% \begin{tikzpicture}[description/.style={fill=white,inner sep=2pt},baseline=(current bounding box.center)]
% \matrix (m) [matrix of math nodes, row sep=2.5em,
% column sep=2.5em, text height=1.5ex, text depth=0.25ex]
% {\mathcal{D} & D & & \\
% \Jac(\mathcal{D}) & \Jac(D)& \V{\chi}{C}& \mathcal{D}\\};
% %\path[->,font=\normalsize]
% \path[->,>=angle 90](m-1-1) edge node[auto] {} (m-1-2);
% \path[->,>=angle 90](m-2-1) edge node[auto] {} (m-2-2);
% \path[->,>=angle 90](m-1-1)edge node[auto,swap] {}(m-2-1);
% \path[->,>=angle 90](m-1-2)edge node[auto,swap] {}(m-2-2);
% \path[->,>=angle 90](m-2-3)edge node[auto,swap] {$\alb_{\chi}$}(m-2-2);
% \path[->,>=angle 90](m-2-4)edge node[auto,swap] {}(m-2-3);
% \end{tikzpicture}
\begin{tikzpicture}[description/.style={fill=white,inner sep=2pt},baseline=(current bounding box.center)]
\matrix (m) [matrix of math nodes, row sep=2.5em,
column sep=5em, text height=1.5ex, text depth=0.25ex]
{&&D&\mbox{}\\
\mathcal{D} & \Jac(\mathcal{D}) &\Jac(D)&\hspace*{-3.5cm} =J_\chi\\
\V{\chi}{C} & \Alb(\V{\chi}{C})& W&\hspace*{-3cm}:=\im \alb_{{\chi},D}.\\};
%\path[->,font=\normalsize]
\path[->>,>=angle 90](m-2-1) edge node[auto] {$\sigma$} (m-1-3);
\path[right hook->,>=angle 90](m-2-1) edge node[auto] {} (m-2-2);
\path[->>,>=angle 90](m-2-2) edge node[auto] {$\Jac(\sigma)\ $} (m-2-3);
\path[right hook->,>=angle 90](m-1-3) edge node[auto] {} (m-2-3);
\path[left hook->,>=angle 90](m-2-1) edge node[auto] {} (m-3-1);
\path[->,>=angle 90](m-2-2) edge node[auto,swap] {$\neq 0$} (m-3-2);
\path[right hook->,>=angle 90](m-3-1)edge node[auto,swap] {}(m-3-2);
\path[->>,>=angle 90](m-3-2)edge node[auto,swap] {}(m-2-3);
\path[->>,>=angle 90](m-3-2)edge node[auto,swap] {}(m-3-3);
\path[->,>=angle 90, bend right= 60,dashed](m-3-4)+(-1.5,0.3) edge(m-1-3);
\path[right hook->,>=angle 90](m-3-3)edge node[auto,swap] {}(m-2-3);
\end{tikzpicture}
\end{equation}
% in which the vertical arrows are the embedding of the curves into their respective
% Jacobians and the horizontal ones are the Jacobian map corresponding to 
% ${\mathcal D} \rightarrow D$, the projection $\alb_{\chi}$,
% and the embedding of $\mathcal D$ into the surface $\V{\chi}{C}$. 
This diagram shows that the image of $\mathcal D$ in $\Jac(D)$ 
coincides with the image of $D$ and hence it is contained in~$W$. 
The assumption that $\dim \im \alb_{{\chi},D}=1$ hence 
yields that $\im \alb_{\chi}=D$ (up to a translation).
Moreover the map $\V{\chi}{C} \rightarrow D$ is $\Gamma$-equivariant
and hence it induces the orbifold pencil as described in 
Theorem~\ref{main}\eqref{componentj}.

If $\Jac(D)$ is a simple abelian variety, then $J_{\chi}=\Jac(D)$ 
as it follows from the discussion above. This yields~\eqref{simple}
which concludes the proof of Theorem~\ref{main}.
\end{proof}

\begin{proof}[Proof of Theorem {\rm~\ref{mainspecialcase}}] 
To derive this proof from Theorem~\ref{main} 
we have to verify that its hypotheses are satisfied.
For $\AAA_{2g}$-singularities, one has $\Alb_P=\Jac(\mathcal{D})$,
where $\mathcal{D}$ is a covering of the branching component
of the minimal resolution of the singularity. 
Note that under the hypothesis $p=2 g+1$ is prime, 
$\Jac(\mathcal{D})$ is simple (cf. \cite[Example~4.8(1)]{Shimura}).
Using~Theorem~\ref{main}\eqref{simple}, the result follows.
% Part \eqref{componentjchi} follows from
% the description of the local Albanese variety 
% in terms of the minimal resolution of the surface singularity $z^N=f(x,y)$ 
% (cf.~Theorem~\ref{productofjactheorem}).
% 
% For $\AAA_{2g}$ singularities, the Jacobian of the 
% exceptional curve of resolution 
% $\mathcal D$ is the local Albanese variety of $P$.
% Hence the assumption \eqref{jacobianassumption} is fulfilled with $\Delta(\mathcal D,\chi)$ 
% being trivial subgroup of $\im \chi$.
\end{proof}

\begin{proof}[Proof of Theorem {\rm \ref{main2}}] Recall that
$\pi_1^{\orb}(\PP^1_{\bar m})=
\pi_1(\PP^1\setminus \{P_i\}_{i=1}^s)/\langle\gamma_i^{m_i}\rangle_{i=1}^s$
where $\gamma_i$ are meridians about 
the points $P_i$ in $\pi_1(\PP^1\setminus \{P_i\}_{i=1}^s)$.
%conjugate to the meridians 
Consider the composition $\lambda_{\rho}$ 
$$\pi_1(\PP^1\setminus \{P_i\}_{i=1}^s)
\overset{\lambda}{\longrightarrow} 
\pi_1^{\orb}(\PP^1_{\bar m})
\overset{\rho}{\longrightarrow} \CC^*.$$
Following the notation introduced in Definition~\ref{orbpencil},
consider the natural surjection morphism
$\Lambda:\pi_1(\PP^2\setminus (C \cup \bigcup_{i=1}^{s} D_i))
\rightarrow \pi_1(\X)$.
Note that the meridians about the components $D_i$ generate the normal subgroup $\ker \Lambda$.
Since they are taken by $\pi$ onto $m_i$-th powers of (eventually powers of) meridians about $P_i$, 
the surjection $\pi$ is induced by  
$\pi_1(\PP^2 \setminus (C \cup \bigcup_{i=1}^{s} D_i)) \rightarrow
\pi_1(\PP^1 \setminus  \{P_i\}_{i=1}^{s})$. Hence we have the following
commutative diagram:
\begin{equation}\label{diagramproof12}
\begin{tikzpicture}[description/.style={fill=white,inner sep=2pt},baseline=(current bounding box.center)]
\matrix (m) [matrix of math nodes, row sep=2.5em,
column sep=2.5em, text height=1.5ex, text depth=0.25ex]
{\pi_1(\PP^2 \setminus (C \cup \bigcup_{i=1}^{s} D_i)) & \pi_1(\PP^1\setminus \{P_i\}_{i=1}^{s})
 \\
\pi_1(\X)  & \pi_1^{\orb}(\PP^1_{\bar m})\\};
%\path[->,font=\normalsize]
\path[->,>=angle 90](m-1-1) edge node[auto] {$\Pi$} (m-1-2);
\path[->,>=angle 90](m-2-1) edge node[auto] {$\pi$} (m-2-2);
\path[->,>=angle 90](m-1-1)edge node[auto,swap] {$\Lambda$}(m-2-1);
\path[->,>=angle 90](m-1-2)edge node[auto,swap] {$\lambda$}(m-2-2);
\end{tikzpicture}
\end{equation}

Since $\chi=\pi^*(\rho)$, the character $\chi$ is the
composition $\pi_1(\X) \overset{\pi}{\longrightarrow} \pi_1^{\orb}(\PP^1_{\bar m})
\overset{\rho}{\longrightarrow} \CC^*$, one has
$\Pi(\ker (\Lambda \circ \chi)) \subseteq \ker (\lambda \circ \rho)$.
Hence diagram (\ref{diagramproof12}) shows that $\pi$ induces
the map of covering spaces
\begin{equation}\label{unramifiedmap}
\left(\PP^2 \setminus (C \cup \bigcup_{i=1}^{s} D_i)\right)_{\Lambda
\circ \chi} \longrightarrow
\left(\PP^1\setminus \{P_i\}_{i=1}^{s}\right)_{\lambda_{\rho}}
\end{equation}
corresponding to the subgroups $K:=\ker(\Lambda \circ \chi)$ and
$K_{\rho}:=\ker(\lambda_{\rho})$ respectively.
The extension of the map (\ref{unramifiedmap}) to a smooth compactification
of $\left(\PP^2 \setminus (C \cup \bigcup_{i=1}^{s} D_i)\right)_{\Lambda
\circ \chi}$ and then to a resolution of its base points yields a map of a 
birational model of $\V{\chi}{C}$ to $\Sigma$; recall that the orbifold
$\PP^1_{\bar m}$ is a global quotient of a Riemann surface~$\Sigma$. And hence
we have also a map 
$\Alb(V^{\chi}_C) \rightarrow \Jac(\Sigma)$.
The Poincar{\'e} Reducibility Theorem yields an isogeny between
$\Alb(V^{\chi}_C)$ and $\Jac(\Sigma) \times A$.

In the case of $n>1$ pencils $\phi_1,\dots,\phi_n$, we
obtain a corresponding map for each $\phi_i$
and hence a map $\Alb(\V{\chi}{C}) \rightarrow \Jac(\Sigma)^n$.
By Definition~\ref{def-indep},
the corresponding map of $H^1$ is surjective and hence, as above,
the Poincar{\'e} Reducibility Theorem yields the claimed isogeny.
\end{proof}

\end{section}

\begin{section}{\texorpdfstring{Curves with $\AAA_{2\g}$-singularities}{Curves with A{2g}-singularities}}\label{examples}

The purpose of this section is to justify the lengthy statements of the main theorems
by highlighting both their power and their subtleties through a series of examples.
Simplifying the statements would only cause a more coarse description of the actual connection 
between characteristic varieties and orbifold pencils.

In what follows we present three essentially different types of situations: the pivotal example 
is shown in Theorem~\ref{thm-degt}, where $\dim \im \alb=2$, $\Alb(\V{\chi_2}{C_2})$ is a simple abelian 
variety, which is the Jacobian of a curve, and hence the image $\alb$ projected onto the isogeny
factors of $\Alb(\V{\chi_2}{C_2})$ is never a curve. Therefore the conditions of Theorem~\ref{main} are
not satisfied. Moreover, $(C_2,\chi_2)$ does not contain a global orbifold pencil (see~\cite{AC-prep}).
Another remarkable fact is that $\V{\chi_2}{C_2}$ is birational to an abelian 
surface of $\CM$-type corresponding to the cyclotomic field~$\QQ(\zeta_5)$. The cyclic quotients of
these abelian surfaces have been studied by Bagnera and deFranchis~\cite{defranchis}; this curve
is the ramification divisor of one of such quotients.

On the other hand in Theorem~\ref{oka} a curve $C_1$ is exhibited (for $k=1$ and $g=2$) whose 
$\Alb(\V{\chi_1}{C_1})$ coincides with $\Alb(\V{\chi_2}{C_2})$, however $\dim \im \alb=1$, which implies, 
by Theorem~\ref{mainspecialcase}, the existence of a global orbifold pencil containing $(C_1,\chi_1)$.
Finally, in Theorem~\ref{ji}, $\dim \im \alb=2$, as for $C_2$. However $\Alb(\V{\chi_3}{C_3})$ 
decomposes (up to isogeny) as a product of three simple Jacobians of curves and the image $\alb$ projected 
onto these factors are always 1-dimensional. By Theorem~\ref{mainspecialcase} this implies the existence
of three independent global orbifold pencils containing~$(C_3,\chi_3)$.

% We will basically consider two classes of curves with
% $\AAA_{2\g}$-singularities for which the cyclic covers have
% different Albanese dimensions and dimension of images of the maps
% $\alb_{\chi}$ (cf. Theorem~\ref{main}). 

\begin{thm}\label{oka}
Let $C_1$ be an irreducible curve in $\PP^2$ given by the equation
\begin{equation}\label{okacurve}
f_{2k}^{2\g+1}+f_{(2\g+1)k}^2=0,
\end{equation}
where $f_i$ is a generic homogeneous polynomial of degree~$i$.
Let $\chi_1$ be the character
of $\pi_1(\PP^2\setminus C_1)$ sending the generator
of $H_1( \PP^2\setminus C_1)=\ZZ_{2k(2\g+1)}$ to a primitive root
of unity of degree $2(2\g+1)$.
Consider $\V{\chi_1}{C_1}$ the cyclic covering of order~$2(2\g+1)$ of $\PP^2$ ramified along~$C_1$.
Let $D_{\AAA_{2\g}}$ be the curve of genus $\g$ which is the cyclic
Belyi cover of $\PP^1_{(2,2g+1,2(2g+1))}$ of degree~$2(2\g+1)$.
Then $\Alb(\V{\chi_1}{C_1}) \sim \Jac(D_{\AAA_{2\g}})$ and the Albanese dimension of $\V{\chi_1}{C_1}$ is~1.
\end{thm}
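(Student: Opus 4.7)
The plan is to exhibit $C_1$ as the $[1:1]$-member of the pencil
\[
\lambda f_{2k}^{2g+1}+\mu f_{(2g+1)k}^2=0,
\]
use this pencil to construct a morphism $\V{\chi_1}{C_1}\to D_{\AAA_{2g}}$ whose composition with an Abel--Jacobi embedding realizes the Albanese map, and match dimensions via a computation of the Alexander polynomial of~$C_1$.

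First I would observe that the above pencil defines a rational map $\phi:\PP^2\dashrightarrow\PP^1$ with two multiple fibers, $\{f_{2k}=0\}$ of multiplicity $2g+1$ and $\{f_{(2g+1)k}=0\}$ of multiplicity~$2$, so that its restriction to $\X_1$ is an orbifold map $\X_1\to\PP^1_{(2,2g+1)}\setminus\{[1:1]\}$. The abelianization of $\pi_1^{\orb}(\PP^1_{(2,2g+1)}\setminus\{[1:1]\})$ equals $\ZZ_{2(2g+1)}$, and $\chi_1$ factors through this quotient because the meridian of $C_1$ lands on a generator. The regular $\ZZ_{2(2g+1)}$-cover of $\PP^1_{(2,2g+1,2(2g+1))}$ is, by Riemann--Hurwitz, exactly the Belyi curve $D_{\AAA_{2g}}$ of Example~\ref{ex:belyi}, of genus~$g$. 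Resolving indeterminacy produces a morphism $\Phi:\V{\chi_1}{C_1}\to D_{\AAA_{2g}}$, and composing with an Abel--Jacobi embedding $D_{\AAA_{2g}}\hookrightarrow\Jac(D_{\AAA_{2g}})$ yields a map to an abelian variety of image dimension one.

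Next I would pin down the Alexander polynomial via the superabundance formula of~\cite{Li7} (or Loeser--Vaqui\'e), checking that for generic $f_{2k},f_{(2g+1)k}$ one has
\[
\Delta_{C_1,\chi_1}(t)=\frac{t^{2g+1}+1}{t+1}.
\]
The lower bound for the multiplicity of this factor is automatic, since the orbifold pencil $\phi$ already provides $g$ linearly independent classes in $H^1(\X_1,\chi_1)$ pulled back from $H^1(D_{\AAA_{2g}})$; the upper bound follows from divisibility~\eqref{divisibility} by the local Alexander polynomials $(t^{2g+1}+1)/(t+1)$ of the $\AAA_{2g}$-singularities together with a genericity argument showing that the adjoint conditions imposed on the relevant linear system by the $2k^2(2g+1)$ nodes in generic position are independent. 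This yields $\dim\Alb(\V{\chi_1}{C_1})=g$. Applying Theorem~\ref{productofjactheorem} to each $\AAA_{2g}$-singularity (whose local Albanese is $\Jac(D_{\AAA_{2g}})$ by Example~\ref{ex:belyi}) expresses $\Alb(\V{\chi_1}{C_1})$ as an equivariant isogeny quotient of a power of $\Jac(D_{\AAA_{2g}})$. Since the latter is a simple abelian variety of $\CM$-type (its endomorphism algebra contains the cyclotomic field $\QQ(\zeta_{2(2g+1)})$), the dimension count forces $\Alb(\V{\chi_1}{C_1})\sim\Jac(D_{\AAA_{2g}})$.

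Finally, by universality of the Albanese, the map $\V{\chi_1}{C_1}\to\Jac(D_{\AAA_{2g}})$ built from $\Phi$ agrees up to translation with $\alb_{\V{\chi_1}{C_1}}$, and its image is the curve $D_{\AAA_{2g}}$, so the Albanese dimension equals~$1$. The main obstacle is the precise determination of the multiplicity of $(t^{2g+1}+1)/(t+1)$ in $\Delta_{C_1,\chi_1}$: the inequality in one direction is supplied by the pencil, but ruling out higher multiplicities requires the genericity of $f_{2k}$ and $f_{(2g+1)k}$ to guarantee the expected dimension of the superabundant linear systems attached to the $\AAA_{2g}$-singularities.
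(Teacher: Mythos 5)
Your construction is the same as the paper's: the pencil generated by $f_{2k}^{2g+1}$ and $f_{(2g+1)k}^2$, viewed as an orbifold map to $\PP^1_{(2,2g+1,2(2g+1))}$, lifted to a morphism $\Phi:\V{\chi_1}{C_1}\to D_{\AAA_{2g}}$, combined with the computation $\Delta_{C_1,\chi_1}=(t^{2g+1}+1)/(t+1)$ (the paper also gets $s=1$ from Example~\ref{2pexample} plus a superabundance calculation via \cite{Li7}), so that $\dim H_1(\V{\chi_1}{C_1},\CC)=2g$ and $\dim\Alb(\V{\chi_1}{C_1})=g$. Up to that point you match the paper.

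The divergence, and the one genuine gap, is in how you deduce the isogeny $\Alb(\V{\chi_1}{C_1})\sim\Jac(D_{\AAA_{2g}})$. You route it through Theorem~\ref{productofjactheorem} plus the claim that $\Jac(D_{\AAA_{2g}})$ is \emph{simple}. That claim is not true for general $g$: the paper invokes simplicity only under the hypothesis that $2g+1$ is prime (proof of Theorem~\ref{mainspecialcase}, citing Shimura), and for composite $2g+1$ the Jacobian of this Belyi/Fermat-quotient curve decomposes (cf.\ the Koblitz--Rohrlich reference in the paper); your remark that the endomorphism algebra contains $\QQ(\zeta_{2(2g+1)})$ gives a field of degree $\varphi(2g+1)<2g$ in that case, so it does not force simplicity. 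Without simplicity, a $g$-dimensional isogeny quotient of a power of $\Jac(D_{\AAA_{2g}})$ need not be isogenous to $\Jac(D_{\AAA_{2g}})$ itself (it could be, say, a power of one of its proper simple factors). The repair is already in your own setup and is exactly what the paper does: the lifted pencil $\Phi$ is surjective onto the curve $D_{\AAA_{2g}}=\Sigma$, hence induces a \emph{surjection} $\Alb(\V{\chi_1}{C_1})\to\Jac(\Sigma)$; since both sides have dimension $g$, this surjection is an isogeny, and the commutative square with the Abel--Jacobi embedding then gives Albanese dimension one. So you should replace the simplicity argument by this surjectivity-plus-dimension-count argument (also, the singular points are $\AAA_{2g}$-points, not nodes, though that does not affect the divisibility bound you use).
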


\begin{remark}
These curves were studied by M.~Oka in \cite{oka:75} and the pencil provided by Theorem~\ref{mainspecialcase}
is the one generated by $f_{2k}^{2\g+1}$ and $f_{(2\g+1)k}^2$. Also note that 
$\Jac(D_{\AAA_{2\g}})$ is the local Albanese variety of any singularity of~$C_1$, see Example~\ref{ex:belyi}.
\end{remark}

\begin{proof} The curve (\ref{okacurve}) has $2k^2(2g+1)$ singularities
each locally equivalent to $u^2=v^{2\g+1}$ forming scheme theoretical
(for generic  $f_{2k},f_{(2\g+1)k}$) complete
intersection $\mathcal B$ given by $f_{2k}=f_{(2\g+1)k}=0$. The Example~\ref{2pexample} provides
a general form of its Alexander polynomial and a calculation
using \cite{Li7} shows that $s=1$ i.e. it is
$ \frac{t^{2 \g+1}+1}{t+1} $.

Consider the pencil of curves of degree $2k(2\g+1)$ given by:
\begin{equation}\label{okapencil}
   \pi_{C_1}: [x_0:x_1:x_2] \mapsto [f_{2k}(x_0,x_1,x_2)^{2\g+1}:f_{2\g+1}(x_0,x_1,x_2)^{2k}]
\end{equation}
yielding a regular map $\PP^2\setminus \mathcal B \rightarrow \PP^1$.
We shall view this as an orbifold pencil with target $\PP^1_{2,{2g+1}}$. Since $\pi_{C_1}(C_1)=p\in \PP^1$,
this map induces another orbifold map
$\PP^2\setminus C_1 \rightarrow \PP^1_{2,{2g+1}}\setminus \{p\}$ by restriction. Note that the inclusion
$\PP^1_{2,{2g+1}}\setminus \{p\} \hookrightarrow \PP^1_{2,2\g+1,2(2g+1)}$ is a dominant
map. The latter orbifold is a global orbifold
quotient by the action of cyclic group $\ZZ_{2(2\g+1)}$
of a cyclic Belyi cover $\Sigma$ having genus $g$ (the value of the genus
follows for example from the Riemann-Hurwitz formula).
Moreover the pencil (\ref{okapencil}) lifts to the regular
map $\tilde \pi_{C_1}: \V{\chi_1}{C_1} \rightarrow \Sigma$. It follows from
\cite{li:82} that $\dim H_1(\V{\chi_1}{C_1})=2g$. Hence the
induced map $\Pi_{C_1}: \Alb(\V{\chi_1}{C_1}) \rightarrow \Jac(\Sigma)$
is an isogeny and one has the commutative diagram:
\begin{equation}
\begin{matrix} \V{}{C_1}& \overset{\tilde \pi_{C_1}}{\longrightarrow} & \Sigma & \cr
                \downarrow & & \downarrow &  \cr
                \Alb(\V{\chi_1}{C_1})& \overset{\tilde \Pi_{C_1}}{\rightarrow}  & \Jac(\Sigma) & \cr
\end{matrix}
\end{equation}
where the vertical arrows are the Albanese map and the canonical embedding of $\Sigma$ into
its Jacobian. This implies that the Albanese image of $\V{\chi_1}{C_1}$ is one dimensional.
\end{proof}

\begin{thm}\label{thm-degt}
Let $C_2$ be the union of a self-dual quintic $C_0$ with 3 $\AAA_4$-singularities
and the line $L$ which is tangent to $C_0$ at one of its singularities, say $P^0$. Consider $\chi_2$ any character
of order 10 that ramifies along $C_0+5L$ (the coefficients represent the ramification indices).
Then

\begin{enumerate}
 \item\label{thm-degt-1}
 The canonical class of the minimal model of $\V{\chi_2}{C_2}$ is trivial.
 \item\label{thm-degt-2}
 $\dim H_1(\V{\chi_2}{C_2},\CC)=4$. In particular this minimal model is an abelian surface.
 \item\label{thm-degt-3}
 This abelian surface is isomorphic to the $\Jac(D_{\AAA_4})$ which is 
 a simple abelian variety and hence its 
 Albanese dimension is~2.
\end{enumerate}
\end{thm}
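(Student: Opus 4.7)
The plan is to build an embedded resolution of $C_2=C_0\cup L\subset\PP^2$, apply Jung's method to construct a smooth model $\widetilde V$ of the $10$-fold cyclic cover $V:=\V{\chi_2}{C_2}$, and read off its numerical invariants from this model.

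First I would describe the branch locus. Each $\AAA_4$-singularity has a unique smooth branch direction; since $L$ is tangent to $C_0$ at $P^0$ and $L\cdot C_0=5$, the intersection $L\cap C_0$ is concentrated at $P^0$ with multiplicity $5$, so $L$ meets $C_0$ only there. A sequence of blow-ups resolving $u^2=v^5$ at the three $\AAA_4$-points $P^0,P^1,P^2$, together with additional blow-ups at $P^0$ to separate the strict transform of $L$ from that of $C_0$, produces an embedded log-resolution $\sigma\colon Y\to\PP^2$ of $C_2$. With cover equation $y^{10}=f_{C_0}\cdot f_L^{\,5}$, ramification is of index $10$ along $\widetilde C_0$, $2$ along $\widetilde L$, and on each exceptional component it is determined by the multiplicities of $C_0$ and $L$ in $\sigma^{*}C_0$ and $\sigma^{*}L$.

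For part \eqref{thm-degt-1}, I would apply the standard canonical-class formula for smooth cyclic covers,
\begin{equation*}
K_{\widetilde V}\;=\;\tilde\pi^{*}\!\left(K_Y+\tfrac{9}{10}\widetilde{C_0}+\tfrac{1}{2}\widetilde L+\sum_{E}\tfrac{e_E-1}{e_E}E\right),
\end{equation*}
where $E$ ranges over exceptional components of $\sigma$ with ramification index $e_E$. On $\PP^2$ one has $K_{\PP^2}+\tfrac{9}{10}C_0+\tfrac{1}{2}L=\bigl(-3+\tfrac{45}{10}+\tfrac{1}{2}\bigr)H=2H$, and a direct enumeration of the $e_E$ from the embedded resolution shows that the push-forward of $\sum_E\tfrac{e_E-1}{e_E}E$ to $\PP^2$ equals $-2H$. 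Contracting the rational $(-1)$-curves that Jung's construction places over the exceptional set of $\sigma$ produces a minimal model with $K\equiv 0$.

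For \eqref{thm-degt-2}, I would compute $h^1(V,\CC)$ from the Alexander polynomial $\Delta_{C_2,\chi_2}(t)$ using the Esnault--Libgober/Loeser--Vaqui\'e expression of the multiplicities of its roots as superabundances of ideal-sheaf linear systems on $\PP^2$ prescribed by the local type of the singularities of $C_2$. The local Alexander polynomial at each $\AAA_4$-point contributes a factor $\tfrac{t^5+1}{t+1}$ (cf.\ Example~\ref{2pexample}); a direct superabundance count at the three $\AAA_4$-points together with the tangency contribution at $P^0$ yields $h^1(V)=4$. Combined with \eqref{thm-degt-1}, Kodaira's classification of surfaces with $K\equiv 0$ forces the minimal model of $V$ to be an abelian surface (since $q=2$). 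For \eqref{thm-degt-3}, Theorem~\ref{productofjactheorem} supplies three $(\im\chi_2)$-equivariant morphisms $\Jac(D_{\AAA_4})\to\Alb(\widetilde V)$, one from each $\AAA_4$-singularity, whose combined image generates the target. Since $p=5$ is prime, $\Jac(D_{\AAA_4})$ is a simple abelian surface of $\CM$-type over $\QQ(\zeta_5)$ (Example~\ref{ex:belyi} and \cite[Example~4.8(1)]{Shimura}), and $\dim\Alb(\widetilde V)=2=\dim\Jac(D_{\AAA_4})$, so at least one of the three maps is an isogeny, which upgrades to an isomorphism after matching the principal polarization of $\Jac(D_{\AAA_4})$ with the one induced on $\Alb(\widetilde V)$ from the cover structure. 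The Albanese image of $\widetilde V$ cannot be a curve inside the simple surface $\Jac(D_{\AAA_4})$, so the Albanese dimension equals $2$.

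The main obstacle, I expect, is \eqref{thm-degt-1}: the canonical-class bookkeeping at $P^0$ is intricate because the germ $u(u^2-v^5)=0$ is considerably more degenerate than $u^2-v^5=0$, and the weights $1$ and $5$ coming from $\chi_2$ interact non-trivially on neighbouring exceptional components. The $h^1=4$ computation in \eqref{thm-degt-2} is also delicate, since one must verify that the three $\AAA_4$-contributions to the superabundance are linearly independent; but once \eqref{thm-degt-1} and \eqref{thm-degt-2} are settled, simplicity of $\Jac(D_{\AAA_4})$ makes \eqref{thm-degt-3} essentially formal.
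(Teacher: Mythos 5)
Your overall strategy (embedded resolution, Jung's method for the $10$-fold cover, canonical-class and Euler-characteristic bookkeeping, Kodaira classification, Alexander polynomial for $q$) is the same as the paper's, but the execution of part~\eqref{thm-degt-1} has a genuine gap. First, the ramification formula $K_{\widetilde V}=\tilde\pi^{*}(K_Y+\sum\frac{e-1}{e}D)$ computes the canonical class of the \emph{normal} finite cover; the smooth Jung model is obtained by resolving the Hirzebruch--Jung quotient singularities sitting over the crossings of the branch divisor, and those resolutions contribute discrepancy terms your formula omits (in the paper these are exactly the curves $a^{++},b^{++},\dots$ of Figure~4). Second, the assertion that the push-forward of $\sum_E\frac{e_E-1}{e_E}E$ to $\PP^2$ equals $-2H$ is vacuous: exceptional divisors push forward to zero, and indeed $K_{\PP^2}+\frac{9}{10}C_0+\frac12L\sim 2H\neq 0$, so the $\QQ$-divisor you pull back is big, not trivial. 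Triviality of $K$ on the minimal model can only come from the fact that $2H\sim\frac{2}{5}C_0$ and the reduced preimage of $C_0$ in the cover is a \emph{rational} curve (a quintic with three $\AAA_4$-points has geometric genus $0$) which must itself be contracted; your list of contracted curves --- ``rational $(-1)$-curves over the exceptional set of $\sigma$'' --- misses precisely this component, as well as the non-$(-1)$ curves (self-intersections $-2$ and $-3$) appearing in the contractible chains. The paper avoids all of this by blowing down $L$ first, factoring the cover as a double cover followed by a $5$-fold cover, and explicitly tracking $K$ and the contractible configurations at each stage.

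Two further points. In part~\eqref{thm-degt-3}, the step ``the Albanese image of $\widetilde V$ cannot be a curve inside the simple surface $\Jac(D_{\AAA_4})$'' is false as reasoning: a simple abelian surface contains no elliptic curves but plenty of curves of genus $\geq 2$ (e.g.\ $D_{\AAA_4}$ itself). The correct and much simpler argument is that by~\eqref{thm-degt-2} the minimal model \emph{is} an abelian surface, so the Albanese map is birational and the Albanese dimension is $2$. Likewise, upgrading the isogeny with $\Jac(D_{\AAA_4})$ to an isomorphism ``by matching principal polarizations'' is asserted rather than proved ($\Alb(\V{\chi_2}{C_2})$ carries no canonical principal polarization from the cover); the paper obtains the identification and the simplicity of $\Jac(D_{\AAA_4})$ from the explicit $\CM$-type over $\QQ(\zeta_5)$ via \cite{cm} and \cite{Shimura}. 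Your treatment of part~\eqref{thm-degt-2} via superabundances is consistent with the paper, which simply quotes the computation $\Delta_{C_2,\chi_2}(t)=t^4-t^3+t^2-t+1$ from \cite{AC-prep}.
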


\begin{proof}
In order to prove part~\eqref{thm-degt-1}, we will construct the 10th-cyclic cover
of $\PP^2$ associated with $\chi_2$.
%ramified along $C_0+5L$ (the coefficients represent the ramification indices).
Note that $K_{\PP^2}=-3H=-\frac{3}{5}C_0$. Denote by $\hat \PP^2$ the resulting surface
(see Figure~\ref{fig-hatP2}) after blowing up the singular points of $C_0$ to obtain a normal
crossing divisor and then blowing down the preimage of $L$.

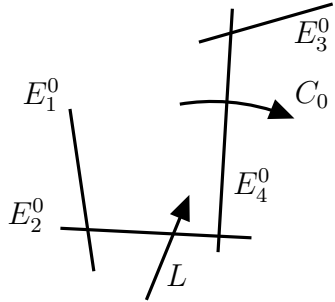
\begin{figure}[ht]
\begin{center}
% \includegraphics[scale=.4]{fig-local-resol}
% \begin{picture}(0,0)
% \put(-125,70){$E_1^0$}
% \put(-130,25){$E_2^0$}
% \put(-85,-10){$L$}
% \put(-45,50){$E_4^0$}
% \put(-30,80){$C_0$}
% \put(-10,100){$E_3^0$}
% \end{picture}
\begin{tikzpicture}[scale=.4,y=-1cm]
% objects at depth 50:
\draw[very thick,black] (10.63625,7.9375) -- (16.98625,8.255);
\node (e20) at (9.5,7.5) {$E_2^0$};
\draw[very thick,black] (15.875,8.73125) -- (16.35125,0.635);
\node (e40) at (17,6.5) {$E_4^0$};
\draw[very thick,black] (15.24,1.74625) -- (19.685,0.47625);
\node (e03) at (19,1.5) {$E_3^0$};
\draw[very thick,black] (10.95375,3.96875) -- (11.7475,9.36625);
\node (e01) at (10,3.5) {$E_1^0$};
\draw[very thick,arrows=-triangle 45,black] (14.605,3.81) ..controls (15.5575,3.65125) and (16.8275,3.65125) .. (18.415,4.28625);
\node (c0) at (19,3.5) {$C_0$};
\draw[very thick,arrows=-triangle 45,black] (13.49375,10.31875) -- (14.9225,6.82625);
\node (l) at (14.5,9.5) {$L$};
\end{tikzpicture}%
\caption{Local resolution at $P^0$}
\label{fig-local-resol}
\end{center}
\end{figure}

To understand this, we will briefly describe the local resolution of the singularity at $P^0$
shown in Figure~\ref{fig-local-resol}. The subindices of $E_i^0$ indicate the order of appearance
of the exceptional divisors. Since the first two blow-ups occur on infinitely near smooth points
of $L$, its self-intersection drops by~$2$. However, these first two infinitely near points are
not smooth on $C_0$, but of multiplicity 2. Since two more blow-ups on infinitely near smooth
points of $C_0$ are required to resolve the singularity, the self-intersection of $C_0$ drops
by~$2\cdot (2)^2+2\cdot (1)^2=10$.

We denote by $P^{\pm}$ the other singular points of type $\AAA_4$. Note that Figure~\ref{fig-local-resol}
(excluding the germ of $L$) also describes a resolution of $P^{\pm}$ in $C_0$. For the corresponding
exceptional divisors we replace the superscript $0$ by $\pm$ accordingly. Analogously as mentioned above,
the self-intersection of $C_0$ drops by $10$ at each point.

By B{\'e}zout's Theorem, $L$ intersects $C_0$ at another point. Since its self-intersection after the
blow-ups is $-1$ and it intersects only $C_0$ and $E_2^0$, we can blow it down keeping the normal crossing
property. The self-intersection of both $E_2^0$ and $C_0$ increases by~1.
The resulting surface is $\hat\PP^2$ and the involved divisors are shown in Figure~\ref{fig-hatP2}.
By the Projection Formula we obtain
$$
K_{\hat \PP^2}=-\frac{3}{5}C_0-\frac{1}{5}\left(E_1^++E_1^-+E_1^0+2E_2^++2E_2^-+2 E_2^0\right).
$$
% where $E_i^0$ (resp. $E_i^{\pm}$) corresponds with exceptional divisors appearing from
% blowing up infinitely near points to $P^0$, the $\AAA_4$-point in $L$ (resp. $P^{\pm}$, the
% other $\AAA_4$-points).
\begin{figure}[ht]
\begin{center}
% \includegraphics[scale=.4]{fig-hatp2}
% \begin{picture}(0,0)
% \put(-10,40){$E_1^+(-2)$}
% \put(-325,40){$E_1^-$}
% \put(-60,20){$E_2^+(-3)$}
% \put(-290,20){$E_2^-$}
% \put(-93,65){$(-1)$}
% \put(-67,65){$E_4^+$}
% \put(-250,72){$E_4^-$}
% \put(-95,100){$E_3^+$}
% \put(-120,90){$(-2)$}
% \put(-235,103){$E_3^-$}
% \put(-115,210){$E_3^0$}
% \put(-190,153){$E_1^0$}
% \put(-132,165){$E_4^0$}
% \put(-175,125){$E_2^0$}
% \put(-182,110){$(-2)$}
% \put(-175,70){$C_0(-4)$}
% \put(-290,180){$\hat \PP^2$}
% \end{picture}
\begin{tikzpicture}[scale=.4,y=-1cm]
% objects at depth 50:
\node (e20) at (13,7.4) {$E_2^0$};
\node[below=-4] at (e20.south) {$(-2)$};
\draw[very thick,black] (10.63625,7.9375) -- (16.98625,8.255);
\node (e40) at (17,4) {$E_4^0$};
\draw[very thick,black] (15.875,8.73125) -- (16.35125,0.635);
\node (e30) at (19,1.5) {$E_3^0$};
\draw[very thick,black] (15.24,1.74625) -- (19.685,0.47625);
\node (e10) at (10,4) {$E_1^0$};
\draw[very thick,black] (10.95375,3.96875) -- (11.7475,9.36625);
\node (e2+) at (24,16.5) {$E_2^+$};
\node[below=-2] at (e2+.south) {$(-3)$};
\draw[very thick,black] (27.46375,17.145) -- (21.11375,17.4625);
\node (e4+) at (22.8,13) {$E_4^+$};
\node[left=-5] at (e4+.west) {$(-1)$};
\draw[very thick,black] (22.225,17.93875) -- (21.74875,9.8425);
\node (e3+) at (20,9.4) {$E_3^+$};
\node[below=-1] at (e3+.south) {$(-2)$};
\draw[very thick,black] (22.86,10.95375) -- (18.415,9.68375);
\node (e1+) at (28,14) {$E_1^+$};
\node[left=-5] at (e1+.west) {$(-2)$};
\draw[very thick,black] (27.14625,13.17625) -- (26.3525,18.57375);
\node (e2-) at (3,16.5) {$E_2^-$};
\draw[very thick,black] (0.3175,17.145) -- (6.6675,17.4625);
\node (e4-) at (6.5,15) {$E_4^-$};
\draw[very thick,black] (5.55625,17.93875) -- (6.0325,9.8425);
\node (e3-) at (8,9.3) {$E_3^-$};
\draw[very thick,black] (4.92125,10.95375) -- (9.36625,9.68375);
\node (e1-) at (1.7,14) {$E_1^-$};
\draw[very thick,black] (0.635,13.17625) -- (1.42875,18.57375);
\node (c0) at (13,13) {$C_0(-5)$};
% \draw[very thick,black] (4.60375,13.0175) -- (7.14375,13.335) -- (9.8425,13.335) -- (11.90625,12.22375) -- (13.49375,10.31875) -- (14.12875,8.89) -- (15.08125,6.985) -- (16.8275,6.0325) -- (19.3675,6.6675) -- (22.70125,8.5725) -- (24.4475,10.795) -- (24.4475,12.7) -- (23.495,13.97) -- (21.43125,14.76375) -- (20.32,14.76375);
\draw[very thick,black] plot [smooth] coordinates {(4.60375,13.0175)  (7.14375,13.335)  (9.8425,13.335) (11.90625,12.22375)  (13.49375,10.31875)   (14.12875,8.89)   (15.08125,6.985)  (16.8275,6.0325)  (19.3675,6.6675)   (22.70125,8.5725)   (24.4475,10.795)  (24.4475,12.7)   (23.495,13.97)  (21.43125,14.76375)   (20.32,14.76375)};
\end{tikzpicture}%
\caption{Surface $\hat \PP^2$}
\label{fig-hatP2}
\end{center}
\end{figure}

The self-intersections of the divisors are shown in parenthesis unless $(E_i^{\bullet})^2=(E_i^{+})^2$.
Since we have blown-up 12 points and blown-down one exceptional divisor, one can compute the Euler
characteristic as follows:
$$
\chi (\hat \PP^2)=\chi (\PP^2)+12-1=14.
$$
An alternative way to obtain a surface birationally equivalent to $\V{\chi_2}{C_2}$ is to consider the
10th cyclic cover of $\hat \PP^2$ ramified along the total transform of $C_0+5L$, that is,
$$
\array{rl}
R:=C_0+7E^0_1+14E_2^0+15E^0_3+30E_4^0+2E_1^{\pm}+4E_2^{\pm}+5E_3^{\pm}+10E_4^{\pm}&\\
\equiv C_0+7E^0_1+4E_2^0+5E^0_3+2E_1^{\pm}+4E_2^{\pm}+5E_3^{\pm} & \mod 10\Pic(\hat\PP^2),
\endarray
$$
where $E_i^{\pm}=E_i^{+}+E_i^{-}$.
It is easier to factor such covering as the composition of a double cover $\pi_2$ and a 5th-fold cover~$\pi_5$.

The double cover of $\hat\PP^2$ is ramified along
$$R_2:=C_0+E_3^{\pm}+E_3^{0}+E_1^{0}\equiv R \quad \mod 2\Pic(\hat \PP^2)$$
and will be denoted by $X$. The dual graph of the total transform $\pi_2^*(R)$ in $X$ is shown in~Figure~\ref{fig-X}.

\begin{figure}[ht]
\begin{center}
% \includegraphics[scale=.4]{fig-x}
% \begin{picture}(0,0)
% \put(-120,100){$X$}
% 
% \put(-225,90){$e_1^{--}$}
% \put(-185,100){$e_2^{--}$}
% \put(-210,63){$e_3^{-}$}
% \put(-225,35){$e_1^{-+}$}
% \put(-185,22){$e_2^{-+}$}
% \put(-150,68){$e_4^{-}$}
% \put(-120,68){$c_0(-2)$}
% \put(-62,68){$e_4^{+}(-2)$}
% \put(-10,68){$e_3^{+}(-1)$}
% \put(-50,100){$e_2^{++}$}
% \put(-75,100){$(-3)$}
% \put(-20,100){$e_1^{++}(-2)$}
% \put(-50,22){$e_2^{+-}$}
% \put(-10,30){$e_1^{+-}$}
% \put(-160,10){$e_3^{0}$}
% \put(-180,-7){$(-1)$}
% \put(-135,-7){$e_4^{0}$}
% \put(-155,-17){$(-2)$}
% \put(-104,-7){$e_2^{0}$}
% \put(-104,-17){$(-4)$}
% \put(-73,10){$e_1^{0}$}
% \put(-70,-7){$(-1)$}
% \end{picture}
\begin{tikzpicture}[scale=.4,y=-1cm, vertice/.style={draw,circle,fill,minimum size=0.25cm,inner sep=0}]

% objects at depth 50:
\node[vertice] (e40) at (12.7,15.24) {};
\node[above left] (e40l) at (e40.north) {$e_4^0$};
\node[below] (e40e) at (e40.south) {$(-2)$};
\node[vertice] (e20) at (15.24,15.24) {};
\node[above right] (e20l) at (e20.north) {$e_2^0$};
\node[below] (e20e) at (e20.south) {$(-4)$};
\node[vertice] (e30) at (10.16,15.24) {};
\node[above] (e30l) at (e30.north) {$e_3^0$};
\node[below] (e30e) at (e30.south) {$(-1)$};
\node[vertice] (e10) at (17.78,15.24) {};
\node[above] (e10l) at (e10.north) {$e_1^0$};
\node[below] (e10e) at (e10.south) {$(-1)$};
\node[vertice] (c0) at (13.97,10.16) {};
\node[above] (c0l) at (c0.north) {$c_0$};
\node[right] (c0e) at (c0l) {$(-2)$};
\node[vertice] (e1++) at (22.5425,7.62) {};
\node[above] (e1++l) at (e1++.north) {$e_1^{++}$};
\node[right] (e1++e) at (e1++.east) {$(-2)$};
\node[vertice] (e2++) at (20.0025,7.62) {};
\node[above] (e2++l) at (e2++.north) {$e_2^{++}$};
\node[left] (e2++e) at (e2++.west) {$\ (-3)$};
\node[vertice] (e4+) at (17.4625,10.16) {};
\node[above] (e4+l) at (e4+.north) {$e_4^{+}$};
\node[below] (e4+e) at (e4+.south) {$(-2)\ $};
\node[vertice] (e3+) at (21.2725,10.16) {};
\node[above] (e3+l) at (e3+.north) {$e_3^{+}$};
\node[below] (e3+e) at (e3+.south) {$(-1)$};
\node[vertice] (e2+-) at (20.0025,12.7) {};
\node[below] (e2+-l) at (e2+-.south) {$e_2^{+-}$};
\node[vertice] (e1+-) at (22.5425,12.7) {};
\node[below] (e1+-l) at (e1+-.south) {$e_1^{+-}$};
\node[vertice] (e1--) at (5.3975,7.62) {};
\node[left] (e1--l) at (e1--.west) {$e_1^{--}$};
\node[vertice] (e2--) at (7.9375,7.62) {};
\node[above] (e2--l) at (e2--.north) {$e_2^{--}$};
\node[vertice] (e4-) at (10.4775,10.16) {};
\node[above] (e4-l) at (e4-.north) {$e_4^{-}$};
\node[vertice] (e2-+) at (7.9375,12.7) {};
\node[below] (e2-+l) at (e2-+.south) {$e_2^{-+}$};
\node[vertice] (e1-+) at (5.3975,12.7) {};
\node[left] (e1-+l) at (e1-+.west) {$e_1^{-+}$};
\node[vertice] (e3-) at (6.6675,10.16) {};
\node[left] (e3-l) at (e3-.west) {$e_3^{-}$};
\draw[very thick] (c0) -- (e40);
\draw[very thick] (c0) -- (e20);
\draw[very thick] (e40) -- (e30);
\draw[very thick] (e10) -- (e20);
\draw[very thick] (e4-) -- (c0);
\draw[very thick] (c0) -- (e4+);
\draw[very thick] (e1++) -- (e2++);
\draw[very thick] (e2++) -- (e4+);
\draw[very thick] (e1+-) -- (e2+-);
\draw[very thick] (e2+-) -- (e4+);
\draw[very thick] (e3+) -- (e4+);
\draw[very thick] (e1--) -- (e2--);
\draw[very thick] (e2--) -- (e4-);
\draw[very thick] (e1-+) -- (e2-+);
\draw[very thick] (e2-+) -- (e4-);
\draw[very thick] (e3-) -- (e4-);
\draw[very thick,style={bend right}] (e40) to (e20);
\draw[very thick,style={bend left}] (e40) to(e20);
\end{tikzpicture}%                                      
\caption{Surface $X$}
\label{fig-X}
\end{center}
\end{figure}
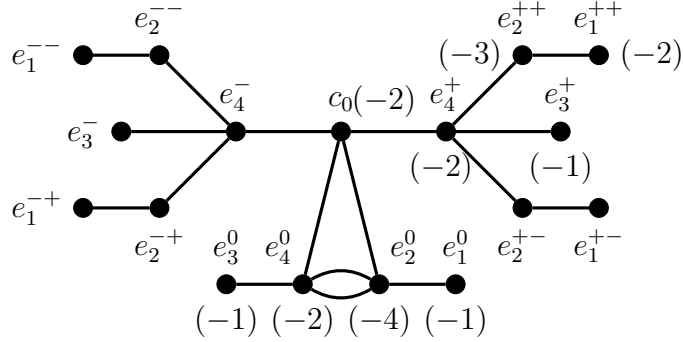

In order to compute the self-intersection of each divisor one has to apply the intersection theory formulas
for covers (cf.~\cite[Chapter~II. Section~10]{barth-compact}).

Note that
$$K_X=-\frac{1}{5}c_0+\frac{3}{5}e_1^0-\frac{2}{5}e_2^0+e_3^0-\frac{1}{5}e_1^{\pm\pm}-\frac{2}{5}e_2^{\pm\pm}+e_3^{\pm},$$
where $e_i^{\pm\pm}$ denotes the sum $e_i^{++}+e_i^{+-}+e_i^{-+}+e_i^{--}$.

By Riemann-Hurwitz, the Euler characteristic of $X$ can be obtained as
$$
\chi(X)=2(\chi(\hat\PP^2)-\chi(R_2))+\chi(\pi_2^*(R_2))=2(14-10)+10=18.
$$

After blowing down the divisors $e_1^0$, $e_3^0$, $e_3^{+}$, and $e_3^{-}$ one obtains the surface $Y$, where
$$K_Y=-\frac{1}{5}\left(c_0+2e_2^0+e_1^{\pm\pm}+2e_2^{\pm\pm}\right) \quad 
\text{ and } \quad \chi(Y)=14.$$

Finally one needs to perform the 5:1 cover of $Y$ ramified along $R_5:=c_0+2e_2^0+e_1^{\pm\pm}+2e_2^{\pm\pm}$,
which incidentally is the support of~$K_Y$.
Note that this divisor has 5 connected components, namely, $e_1^{++}+2e_2^{++}$, $e_1^{+-}+2e_2^{+-}$,
$e_1^{-+}+2e_2^{-+}$, $e_1^{--}+2e_2^{--}$, and $c_0+2e_2^0$, each with the same combinatorial structure
as shown at the bottom of Figure~\ref{fig-Y}. The appropriate ramified cover on $e_1^{++}+2e_2^{++}$ is 
shown in Figure~\ref{fig-Y}. Next to each irreducible component a list of numbers is shown: the first one
being the self-intersection of the component, the second one being its multiplicity in the corresponding 
canonical $\QQ$-divisor ($K_Y$ or $K_Z$), and the third one (where applicable) being the ramification index.  
The components $\varepsilon_i^{++}$ are the strict transforms of $e_i^{++}$ by the 5:1 cover, while the 
remaining components $a^{++}$ and $b^{++}$ project onto the double point. Note that the support of $K_Z$
is in the preimage of $R_5$. After blowing down all components, one obtains a smooth 
surface $\hat Z$ with trivial canonical divisor, which is in particular the minimal model of~$\V{\chi_2}{C_2}$.
Using Riemann-Hurwitz once again, one obtains
$$
\chi(\hat Z)=5(\chi(Y)-\chi(R_5))+5=5(14-5\cdot 3)+5=0.
$$

% \vspace*{24pt}
\begin{figure}[ht]
\begin{center}
\begin{tikzpicture}[scale=.5,very thick,>=triangle 45]
\node (ge1++) at (-2.4,0) {$\varepsilon_1^{++}$};
\node[left]  at (ge1++.west) {$(-1,3)$};
\draw (-4,-1)-- (-1,3);
\node (a++) at (0.5,3.6) {$a^{++}$};
\node[below]  at (a++.south) {$(-2,2)$};
\draw (-3,2)-- (3,4);
\node (b++) at (5.3,3.6) {$b^{++}$};
\node[below left=-2]  at (b++.south) {$(-3,1)$};
\draw (2,4)-- (8,2);
\node (ge2++) at (7.2,0) {$\varepsilon_2^{++}$};
\node[right]  at (ge2++.east) {$(-1,2)$};
\draw (6,3)-- (9,-1);
\draw[->] (2.5,1)-- (2.5,-2);
\node (e1++) at (1,-4.5) {$e_1^{++}$};
\node[above left]  at (e1++.west) {$(-2,-\frac{1}{5},1)$};
\draw (-2,-5)-- (3,-3);
\node (e2++) at (4,-4.5) {$e_2^{++}$};
\node[above right]  at (e2++.east) {$(-3,-\frac{2}{5},2)$};
\draw (2,-3)-- (7,-5);
\end{tikzpicture}
\caption{Surface $Y$}
\label{fig-Y}
\end{center}
\end{figure}
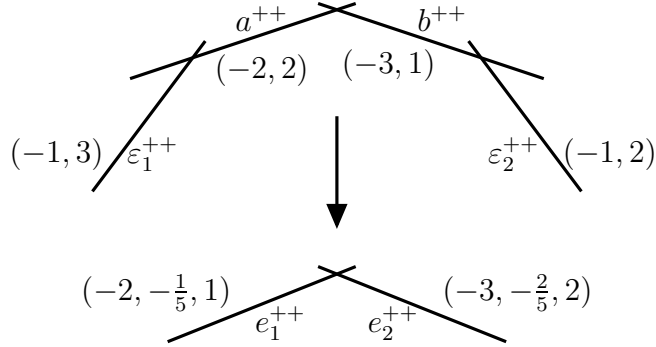

From the Kodaira classification (see~\cite[Table~10]{barth-compact}) the minimal model is a torus
and hence it is an abelian surface.

For part~\ref{thm-degt-2}, note that the degree of the Alexander polynomial of $C_2$ associated with $\chi_2$
(see \cite[section 2.2]{acl-depth}) is $t^4-t^3+t^2-t+1$ (\cite[Theorem 4.5]{AC-prep}). Since
$\dim \Alb(\V{\chi_2}{C_2})=\frac{1}{2}\deg \Delta_{C_2,\chi_2}(t)=2$, the result follows.
\end{proof}

\begin{remark} Note that $\Jac(\AAA_4)$ is a simple abelian variety. This follows
from discussion in \cite{cm} yielding that $\CM$-field in this case
is $\QQ(\zeta_5)$ and explicit description of the $\CM$-type there.
More generally, for the singularity type $x^p+y^q$, where $p,q$ are different prime numbers, 
recall that Arnold-Steenbrink's spectrum provides the $\CM$-type 
for the local Albanese variety (cf. \cite{cm}), whose explicit description is well known. 
One can apply Shimura-Taniyama conditions for primitivity of a $\CM$-type (cf. \cite{Shimura}) 
to verify that the local Albanese variety is simple in this case. 
In particular Theorem~\ref{main}\eqref{simple} can be applied to those plane curve singularities.

In general, however, local Albanese variety has several 
isogeny components. In the case of uni-branched curves
they all are Jacobians of Belyi cyclic covers (cf. \cite{cm})
and hence are the components of Jacobians of Fermat curves.
We refer for additional information regarding these Jacobians
to \cite{koblitz} and \cite{aoki:91}.
\end{remark} 

\begin{thm}\label{ji}
Let $C_3$ be an irreducible curve in $\PP^2$ given by the equation
\begin{equation}
x_0^{2m}+x_1^{2m}+x_2^{2m}-2(x_0^mx_1^m+x_1^mx_2^m+x_2^mx_0^m)=0,
\end{equation}
where $m$ is an odd number, say $m=2\g+1$.
Consider $\V{\chi_3}{C_3}$ the cyclic covering of order~$2m$ of $\PP^2$ ramified along~$C_3$.
Let $D_{\AAA_{2\g}}$ be as above.
Then $\Alb(\V{\chi_3}{C_3})$ is isogenous to $\Jac(D_{\AAA_{2\g}})^3$
and the Albanese dimension of $\V{\chi_3}{C_3}$ is~2.
\end{thm}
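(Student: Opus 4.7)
The plan is to compute $\Alb(\V{\chi_3}{C_3})$ in two steps: first as a quotient of the sum of local Albanese varieties at the $3m$ $\AAA_{2g}$-singularities of $C_3$ (following~\cite{cm}), then by matching with $\Jac(D_{\AAA_{2g}})^{3}$ using the large symmetry of $C_3$ and a dimension count. The Albanese dimension statement will then follow by exhibiting three orbifold pencils on $(\X,\chi_3)$, each contributing to the Albanese map independently.

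Under the substitution $y_i=x_i^m$ the polynomial defining $C_3$ becomes the smooth conic $C:\,y_0^2+y_1^2+y_2^2-2(y_0y_1+y_1y_2+y_0y_2)=0$ inscribed in the coordinate triangle of $\PP^2_y$, tangent to $L_i=\{y_i=0\}$ at $T_0=[0:1:1]$, $T_1=[1:0:1]$, $T_2=[1:1:0]$. Thus $C_3=\psi^{*}C$ for the $(\ZZ/m)^{2}$-Galois map $\psi:[x_i]\mapsto[x_i^m]$ ramified along $L_0\cup L_1\cup L_2$, and the $m$ preimages of each $T_i$ are the $3m$ $\AAA_{2g}$-singularities of $C_3$, organised into three $(\ZZ/m)^{2}$-orbits cyclically permuted by the $S_3$-symmetry of~$C_3$. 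By Example~\ref{ex:belyi} the local Albanese variety at each such singularity is $\Jac(D_{\AAA_{2g}})$; the natural $(\im\chi_3)$-equivariant surjection from local to global Albanese of~\cite{cm}, combined with the fact that each orbit contributes $(\ZZ/m)^{2}$-equivariantly a single copy of $\Jac(D_{\AAA_{2g}})$, gives a surjective $(\im\chi_3)$-equivariant morphism $\Jac(D_{\AAA_{2g}})^{3}\twoheadrightarrow\Alb(\V{\chi_3}{C_3})$. A superabundance calculation in the style of~\cite{Li7} for the $\AAA_{2g}$-singularities and $\deg C_3=2m$ shows $\deg\Delta_{C_3,\chi_3}\le 6g$, hence $\dim\Alb(\V{\chi_3}{C_3})\le 3g$; equality is forced and the surjection above becomes an isogeny $\Alb(\V{\chi_3}{C_3})\sim\Jac(D_{\AAA_{2g}})^{3}$.

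For the Albanese dimension, for each $i\in\{0,1,2\}$ a direct computation shows that the pencil of conics bitangent to $L_j$ at $T_j$ and to $L_k$ at $T_k$ (with $\{i,j,k\}=\{0,1,2\}$) is generated by $C$ and $L_jL_k$, with only non-smooth members $L_jL_k$ itself and the double line $2M_i$ where $M_i=\{y_i-y_j-y_k=0\}$. Pulling back by $\psi$ and restricting to $\X$ produces an orbifold pencil
\begin{equation*}
\phi_i:\X\dashrightarrow\PP^1_{2,m,\infty}
\end{equation*}
with multiple fibres $2\widetilde{M}_i$ (with $\widetilde{M}_i=\{x_i^m-x_j^m-x_k^m=0\}$ smooth since $m$ is odd) and $mL'_j+mL'_k$ (with $L'_\ell=\{x_\ell=0\}$), removed fibre~$C_3$, and with degree-$2m$ Galois cover given by Riemann--Hurwitz as the curve $D_{\AAA_{2g}}$; the pulled-back character is~$\chi_3$. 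Theorem~\ref{mainspecialcase} applied to each $\phi_i$ yields a non-trivial morphism $\V{\chi_3}{C_3}\to D_{\AAA_{2g}}$, and since the three pencils are $S_3$-conjugate but pairwise distinct, any two of them, say $\phi_0$ and $\phi_1$, cannot both factor through a common curve; hence $(\phi_0,\phi_1):\V{\chi_3}{C_3}\to D_{\AAA_{2g}}^{2}$ has two-dimensional image, realising the Albanese dimension as~$2$. The hardest step is the sharp bound $\dim\Alb(\V{\chi_3}{C_3})\le 3g$, which requires a careful superabundance computation exploiting the arrangement of the $3m$ singularities in three symmetric $(\ZZ/m)^{2}$-orbits; once this is settled, the rest is formal.
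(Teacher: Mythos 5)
Your identification of $C_3$ as the pullback of a smooth conic inscribed in the coordinate triangle under $[x_i]\mapsto[x_i^m]$ is correct and attractive, and the three pencils you extract from it are exactly the pencils $\Lambda_i$ of the paper's proof: your pencil generated by $C$ and $L_jL_k$ pulls back to $\alpha(x_jx_k)^m+\beta(x_j^m+x_k^m-x_i^m)^2$, with $C_3$ the member $[-4:1]$. However, your proof of the isogeny has genuine gaps. First, the surjection $\Jac(D_{\AAA_{2\g}})^3\twoheadrightarrow\Alb(\V{\chi_3}{C_3})$ does not follow from what you write: the local-to-global result of \cite{cm} gives a surjection from the sum of the local Albanese varieties over \emph{all} $3m$ singular points, i.e. from $\Jac(D_{\AAA_{2\g}})^{3m}$, and the assertion that ``each $(\ZZ/m)^2$-orbit contributes a single copy'' is precisely what needs proving --- the $m$ maps $\Alb_P\to\Alb(\V{\chi_3}{C_3})$ for $P$ in one orbit are conjugate under the lifted symmetry, but this does not bound the dimension of their joint image by $\g$. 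Second, the bound $\deg\Delta_{C_3,\chi_3}\le 6\g$ is deferred to an unexecuted superabundance computation that you yourself flag as the hardest step; and even granting it, a surjection from $\Jac(D_{\AAA_{2\g}})^{3m}$ together with $\dim\Alb=3\g$ determines $\Alb(\V{\chi_3}{C_3})$ up to isogeny only when $\Jac(D_{\AAA_{2\g}})$ is simple (essentially $m$ prime); for general odd $m$ the multiplicities of the simple factors are not fixed by a dimension count.

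The paper runs the argument in the opposite direction and thereby avoids both problems: it quotes the exact Alexander polynomial of $C_3$ from \cite{ji-fundamental} to get $\dim\Alb(\V{\chi_3}{C_3})=3\g$ on the nose, and it proves that the three lifted pencils $\Phi_i:\V{\chi_3}{C_3}\to\Sigma=D_{\AAA_{2\g}}$ are \emph{strongly independent}, so that Theorem~\ref{main2} yields a surjection $\Alb(\V{\chi_3}{C_3})\twoheadrightarrow\Jac(D_{\AAA_{2\g}})^3$ between equidimensional abelian varieties, hence an isogeny, for every odd $m$. That strong independence is the technical core of the proof --- a resolution of the base loci exhibits over each base point $P$ a dicritical section whose preimage $\Sigma_P\cong\Sigma$ maps isomorphically under two of the $\Phi_i$ and constantly under the third, forcing $\ZZ[\mu_{2m}]$-independence and surjectivity of $\bigoplus_i\Phi_{i,*}$ --- and your proposal does not address it at all. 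Finally, for the Albanese dimension, ``two distinct $S_3$-conjugate pencils cannot both factor through a common curve'' is not a proof; the paper's argument is that a generic fibre of $\phi_1\times\phi_2$ is the intersection of generic members of two distinct pencils, hence finite, so $\Phi_1\times\Phi_2$ is generically finite and its image, through which the Albanese map factors, is two-dimensional. That last fix is easy; the gaps in the isogeny argument are not.
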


\begin{proof}
The pencils of curves
$$\Lambda_i=\{F_{i,[\alpha:\beta]}\mid [\alpha:\beta]\in \PP^1\},$$
%$$[x_0:x_1:x_2]\stackrel{\phi_i}{\mapsto} [x_j^mx_k^m:(x_j^m+x_k^m-x_i^m)^2],$$
(where $F_{i,[\alpha:\beta]}=\{\alpha (x_jx_k)^m + \beta(x_j^m+x_k^m-x_i^m)^2=0\}$
and $\{i,j,k\}=\{0,1,2\}$)
%$[x_0:x_1:x_2]\mapsto [x_1^mx_2^m:(x_1^m+x_2^m-x_0^m)^2]$,
%$[x_0:x_1:x_2]\mapsto [x_2^mx_0^m:(x_2^m+x_0^m-x_1^m)^2]$, and
%$[x_0:x_1:x_2]\mapsto [x_0^mx_1^m:(x_0^m+x_1^m-x_2^m)^2]$ resp.
induce orbifold morphisms from $\PP^2$ onto the compact orbifold~$\PP^1_{([1:0],2),([0:1],m)}$.
Since $C_3=F_{i,[-4:1]}$ they also define (by restriction) orbifold morphisms
$\phi_i:\PP^2\setminus C_3\to \PP^1_{2,m,2m}$ defined as
$$[x_0:x_1:x_2]\stackrel{\phi_i\ }{\mapsto} [x_j^mx_k^m:(x_j^m+x_k^m-x_i^m)^2].$$

If one shows that these morphisms are strongly independent, then by Theorem~\ref{main2}, they define a surjective
morphism $\Alb(\V{\chi_3}{C_3})\to \Jac(D_{\AAA_{2\g}})^3$.
Note that $D_{\AAA_{2\g}}$ is a curve of genus $\g$.
Moreover, the Alexander polynomial of $C_3$ associated with $\chi_3$ is the classical Alexander polynomial since
$C_3$ is irreducible, which is $\Delta_{C_3}(t)=\left(\frac{t^{2g}+1}{t+1}\right)^3$ (see~\cite{ji-fundamental}). 
Thus
$$\dim \Alb(\V{\chi_3}{C_3})=\frac{1}{2}\deg \Delta_{C_3}(t)=3\g$$ 
and then $\Alb(\V{\chi_3}{C_3})\sim \Jac(D_{\AAA_{2\g}})^3$ by dimension reasons.

For the last part, consider $(\phi_1\times \phi_2):\PP^2\setminus C_3\to (\PP^1_{2,m,2m})^2$.
Note that the preimage of a generic point is the intersection of two generic members of the pencils
$\Lambda_1$ and $\Lambda_2$ and hence the morphism is finite. The same applies to 
$(\Phi_1\times \Phi_2):\V{\chi_3}{C_3}\to \Sigma^2$. By the standard properties of the Albanese map,
$\alb(\Phi_1\times \Phi_2):\Alb(\V{\chi_3}{C_3})\to \Jac(D_{\AAA_{2\g}})^2$ is surjective. 
Since the Albanese map of $\V{\chi_3}{C_3}$ factors through $\alb(\Phi_1\times \Phi_2)$, the result follows.

It remains to show that the global quotient orbifold pencils $\phi_0$, $\phi_1$, and $\phi_2$ are strongly independent,
in other words, that the morphisms $\Phi_{i,*}:H_1(\V{\chi_3}{C_3})\to H_1(\Sigma)$, $i=0,1,2$, obtained from~\eqref{diagramorbchar3},
\begin{equation}\label{diagramorbchar3}
\begin{tikzpicture}[description/.style={fill=white,inner sep=2pt},baseline=(current bounding box.center)]
\matrix (m) [matrix of math nodes, row sep=2.5em,
column sep=2.5em, text height=1.5ex, text depth=0.25ex]
{\V{\chi_3}{C_3} & \Sigma \\
\PP^2\setminus C_3 & \PP^1_{2,m,2m}\\};
%\path[->,font=\normalsize]
\path[->,>=angle 90](m-1-1) edge node[auto] {$\Phi_i$} (m-1-2);
\path[->,>=angle 90](m-2-1) edge node[auto] {$\phi_i$} (m-2-2);
\path[->,>=angle 90](m-1-1)edge node[auto,swap] {}(m-2-1);
\path[->,>=angle 90](m-1-2)edge node[auto,swap] {}(m-2-2);
\end{tikzpicture}
\end{equation}
are $\ZZ[\mu_{2m}]$-independent ($\mu_{2m}\subset \CC^*$ the cyclic group of $2m$-roots of unity) 
and that $\oplus_{i=0}^2 \Phi_{i,*}:H_1(\V{\chi_3}{C_3})\to H_1(\Sigma)^3$ is surjective
(see Definition~\ref{def-indep})

Note that the base points of the pencils can be described as follows: let $\{i,j,k\}=\{0,1,2\}$ and consider 
$$\Delta_i:=\{x_i=0\}\cap Q_j=\{x_i=0\}\cap Q_k,$$ 
$Q_i:=\{x_j^m+x_k^m-x_i^m=0\}$. The $2m$ base points of $\Lambda_i$ are $\Delta_j\cup \Delta_k$.

In order to understand $\V{\chi_3}{C_3}$ we will first consider a resolution of the base points of the pencil $\Lambda_i$.
This is shown in Figure~\ref{fig-resol}, where
$\tilde \ell_P$ (resp. $\tilde C_3$, and $\tilde Q_i$) represents the strict preimage of $\ell_P$, the axis containing $P$
(resp. $C_3$, and the Fermat curve $Q_i$). The notation $[k]$ next to an irreducible component $E$ indicates the image by 
$\chi_3$ of a meridian $\gamma$ around the irreducible component $E$ as follows:
$$\chi_3(\gamma)=e^{\frac{k}{m}\pi \sqrt{-1}}.$$
Unbranched components, i.e. $[k]=[0]$, are shown in dashed lines.

\begin{figure}[ht]
\begin{center}
\begin{tikzpicture}[scale=.4,>=triangle 45]
\node at (-2.4,-1.3) {$[2 g],E_{g,P}$};
\node at (8.2,0) {$E_{g+2,P},[0]$};
\draw[very thick,dashed] (-1,0)-- (6,0);
\draw[very thick] (-1,-1)-- (1,2);
\node at (3.7,2.5) {$\tilde{C_3},[1]$};
\draw[very thick,->] (2,-1)-- (4,2);
\node at (6,-2) {$E_{g+1,P},[m]$};
\draw[very thick] (4,-1)-- (6,2);
\node at (5,4.6) {$\tilde{Q_i},[0]$};
\draw[very thick,->,dashed]  (6,1)-- (5,4);
\node at (-4.2,3.5) {$[2 g-2],E_{g-1,P}$};
\draw[very thick] (1,1)-- (-1,4);
\node at (-.5,5.5) {$\vdots$};
\node at (-2.8,7.5) {$[4],E_{2,P}$};
\draw[very thick]  (-1,7)-- (1,10);
\node at (1.7,11) {$E_{2,P},[2]$};
\draw[very thick] (1,9)-- (-1,12);
\node at (-1.2,13.5) {$[0],\tilde{\ell}_{P}$};
\draw[very thick,->,dashed] (-1,11)-- (1,14);
\end{tikzpicture}
% \includegraphics[scale=.5]{fig-resol}
% \begin{picture}(0,0)
% \put(-10,10){$E_{g+2,P},[0]$}
% \put(-5,55){$E_{g+1,P},[m]$}
% \put(-80,55){$\tilde C_3,[1]$}
% %\put(-135,55){$E_{g,j},[m-1]$}
% \put(-235,-10){$[2g],E_{g,P}$}
% \put(-70,87){$\tilde Q_i,[0]$}
% \put(-140,220){$\tilde \ell_P,[0]$}
% \put(-205,192){$[2],E_{1,P}$}
% \put(-135,160){$E_{2,P},[4]$}
% \put(-260,75){$[2g-2],E_{g-1,P}$}
% \put(-160,90){$\vdots$}
% \end{picture}
\caption{}
\label{fig-resol}
\end{center}
\end{figure}

In other words $\V{\chi_3}{C_3}$ is the cyclic covering of order $2m$ ramified along the locus
$$C_3+\sum_{P\in \Delta}\left(2E_{1,P}+4E_{2,P}+\dots +(2g-2)E_{g-1,P}+2gE_{g,P}+mE_{g+1,P}\right),$$
where~$\Delta=\bigcup_{i=0}^2\Delta_i$. To resolve each $\Lambda_i$ it would be enough to blow-up over
$\Delta_j\cup \Delta_k$, but this way the same surface works for the three pencils.

In particular, note that $\V{\chi_3}{C_3}$ will contain curves $\Sigma_P$ which are the cyclic covering of
$E_{g+2,P}$ ramified at 3 points of ramification indices $1$, $m-1$, and~$m$. It is easy to check that the
orders of $\chi_3$ at the meridians of these points are $2m$, $m$, and $2$ respectively. Hence 
$\Sigma_P=\Sigma$ is the curve of genus $g$ which is the Belyi cover $D_{\AAA_{2g}}$ of~$\PP^1_{2,m,2m}$.

Moreover, if $P\in \Delta_k$, then $\Phi_i|_{\Sigma_P}:\Sigma_P\to \Sigma$ and
$\Phi_j|_{\Sigma_P}:\Sigma_P\to \Sigma$ are isomorphisms since $E_{g+2,P}$ in Figure~\ref{fig-resol} is a
dicritical section of $\Lambda_i$ and $\Lambda_j$, whereas $\Phi_k|_{\Sigma_P}:\Sigma_P\to \Sigma$ is a constant map.
This immediately implies the result as follows. Consider three indeterminacy points distributed among the axes,
for instance $P_0:=[0:1:1]$, $P_1:=[1:0:1]$, and~$P_2:=[1:1:0]$. By the previous considerations non-trivial
meridians $\gamma_i\in H_1(\Sigma_{P_i})\cong H_1(\Sigma)$ exist considered as cycles in $H_1(\V{\chi_3}{C_3})$ via
the inclusion and such that
$$\Phi_j(\gamma_i)=\begin{cases} \gamma & \text{ if } i\neq j\\ 0 & \text{ if } j=i,\end{cases}$$
where $\gamma\in H_1(\Sigma)$ is a non-trivial cycle. If $\Phi_{i,*}$ were dependent morphisms, then there
should exist coefficients $\alpha_0,\alpha_1,\alpha_2\in \ZZ[\mu_{2m}]$ such that
$$
\alpha_0 \Phi_{0,*}+\alpha_1 \Phi_{1,*}+\alpha_2 \Phi_{2,*}\equiv 0,
$$
but using the cycle $\gamma_0$ one obtains that $\alpha_1=-\alpha_2$, analogously, using $\gamma_1$ (resp. $\gamma_2$)
one obtains $\alpha_0=-\alpha_2$ (resp. $\alpha_0=-\alpha_1$). Therefore $\alpha_1=\alpha_0=\alpha_2=\alpha$ and $2\alpha=0$
in $\ZZ[\mu_{2m}]$, which implies $\alpha=0$. The fact that the map
$\oplus_{i=0}^2 \Phi_{i,*}:H_1(\V{\chi_3}{C_3})\to H_1(\Sigma)^3$ is surjective follows from the existence
of the dicritical sections $E_{g+2,P_i}$ and the induced isomorphisms $\Phi_j|_{\Sigma_{P_i}}:\Sigma_{P_i}\to \Sigma$
for $j\neq i$ described above.
\end{proof}

\end{section}

% \bibliographystyle{amsplain}
% \bibliography{biblio_ea}

%\bigskip

\def\cprime{$'$}
\providecommand{\bysame}{\leavevmode\hbox to3em{\hrulefill}\thinspace}
\providecommand{\MR}{\relax\ifhmode\unskip\space\fi MR }
% \MRhref is called by the amsart/book/proc definition of \MR.
\providecommand{\MRhref}[2]{%
  \href{http://www.ams.org/mathscinet-getitem?mr=#1}{#2}
}
\providecommand{\href}[2]{#2}

\end{document}